\newtheorem{theorem}{Theorem}[section]
\newaliascnt{lemma}{theorem}
\newtheorem{lemma}[lemma]{Lemma}
\newaliascnt{conjecture}{theorem}
\newaliascnt{proposition}{theorem}
\newtheorem{proposition}[proposition]{Proposition}
\newaliascnt{corollary}{theorem}
\newtheorem{corollary}[corollary]{Corollary}
\newaliascnt{problem}{theorem}
\newaliascnt{question}{theorem}
\newtheorem{question}[question]{Question}
\newaliascnt{claim}{theorem}
\newtheorem{claim}[claim]{Claim}
\theoremstyle{definition}
\newaliascnt{definition}{theorem}
\newtheorem{definition}[definition]{Definition}
\newaliascnt{example}{theorem}
\newtheorem{example}[example]{Example}
\theoremstyle{remark}
\newaliascnt{remark}{theorem}
\newtheorem{remark}[remark]{Remark}
\newaliascnt{remarks}{theorem}
\numberwithin{equation}{section}
\numberwithin{figure}{section}
\def\wt{\widetilde}
\def\({$($}
\def\){$)$}
\def\bbp{\mathbb P}
\def\Pic{\text{{\rm Pic\,}}}
\def\Alb{{\rm Alb}}
\def\Alb{\mathrm{Alb}}
\begin{document}

\title{Upper bounds on the genus of Hyperelliptic  Albanese fibrations}	
%%%%%%%%%%%%%%%%%%%%%%%%%%%%%%%%%%%%%%%%%%%%%%%%%%%%%%%%%%%%%%%%%%%%%%%%%%%%%%%%%%%%%%%%%%%%%%%%%%%%%%%%
%    Information for first author
\author{Songbo Ling}
%    Address of record for the research reported here

\address{School of Mathematics, Shandong University, Jinan 250100, People's Republic of China}

\email{lingsongbo@sdu.edu.cn}

%    Information for second author
\author{Xin L\"u}

\address{School of Mathematical Sciences,  Key Laboratory of MEA(Ministry of Education) \& Shanghai Key Laboratory of PMMP,  East China Normal University, Shanghai 200241, China}

\email{xlv@math.ecnu.edu.cn}

\thanks{This work is supported by National Natural Science Foundation of China,  Shanghai Pilot Program for Basic Research (No. TQ20240202), Fundamental Research Funds for central Universities, Science and Technology Commission of Shanghai Municipality (No. 22DZ2229014) and  Natural Science Foundation of Shandong Province (No.  ZR2023QA00).}

%   General info
\subjclass[2020]{14J29; 14J10; 14D06}

%14D06 Families, fibrations: Fibrations, degenerations
%14H10 Curves: Families, moduli (algebraic)
%14D99 Families, fibrations: None of the above, but in this section
%14J29 Surfaces and higher-dimensional varieties: Surfaces of general type
% 14J29: "Surfaces of general type" (MSC2020)
%
% 14J10: "Families, moduli, classification: algebraic theory" (MSC2020)
%
% 14D06: "Fibrations, degenerations in algebraic geometry" (MSC2020)

%\date{June 01, 2015}

%\dedicatory{This paper is dedicated to our advisors.}

\keywords{surface of general type, Albanese map, fibration}

%%%%%%%%%%%%%%%%%%%%%%%%%%%%%%%%%%%%%%%%%%%%%%%%%%%%%%%%%%%%%%%%%%%%%%%%%%%%%%%%%%%%%%%%%%%%%%
	\maketitle

%{\color{red}
%Is it better if we change title to : Sharp upper bounds on the genus of  Hyperelliptic  fibrations ?
%}	
	\begin{abstract}
		Let $S$ be a minimal irregular surface of general type, whose Albanese map induces a hyperelliptic fibration $f:\,S \to B$ of genus $g$.
		We prove a quadratic upper bound on the genus $g$, i.e., $g\leq h\big(\chi(\mathcal{O}_S)\big)$, where $h$ is a quadratic function.
		We also construct examples showing that the quadratic upper bounds can not be improved to linear ones.
		In the special case when $p_g(S)=q(S)=1$, we show that $g\leq 14$.

%\vspace{2ex}
%{\color{red}
%Let  $f:\,S \to B$  be a fibration from a surface of general type to a smooth curve with fibres of genus $g$. If $f$ is induced by the canonical map of $S$, Beauville proved that there is a numerical upper bound  for $g$ ($g\leq 5$ if $\mathcal{O}_S\geq 21$, which is improved by Gang Xiao and  Xi Chen to $g\leq 4$ under the condition $p_g(S)$ is sufficiently large). For general fibration, there is no numerical upper bound for $g$.  In this paper we show that for hyperelliptic fibrations, $g$ is bounded from above by a quadratic polynomials of $K_f^2$ and $\chi_f$. We also construct examples showing that the quadratic upper bounds are in fact sharp.
%
%In the special case when $p_g(S)=q(S)=1$, we show that $g\leq 14$.
%}
	\end{abstract}

	\section{Introduction}
	We work over the complex number throughout this paper.
	Let $S$ be a minimal irregular surface of general type, and $a:\,S \to \Alb(S)$ be  its Albanese map.
	We are interested in the case when the image $a(S)$ is a curve.
	In this case, the Albanese map induces a fibration, which we call the Albanese fibration of $S$:
	$$f:\,S \longrightarrow B.$$
	In fact, by the universal property of the Albanese map, $B \cong a(S)$, and under this isomorphism the above fibration $f$ is nothing but the Albanese map of $S$.
	
	Let $g$ be the genus of a general fiber of $f$.
	A natural problem is to study the behavior of the genus $g$.
	\begin{question}
		Can we give an  upper bound on the genus $g$ of the Albanese fibration of $S$?
	\end{question}
	By \cite{Cat00},
	it is known that the genus $g$ of the Albanese fibration of $S$ is a differential invariant, and hence is also a deformation invariant.
	Fixing $\chi(\mathcal{O}_S)$ and $K_S^2$, there are only finitely many deformation equivalence classes of such surfaces, cf. \cite[\S\,VII]{bhpv}.
	Hence there must be an upper bound on $g$ depending on $\chi(\mathcal{O}_S)$ or $K_S^2$.
	However, it is still unclear how the upper bound depends on $\chi(\mathcal{O}_S)$ (or $K_S^2$).
	Explicit upper bounds are only known under some extra assumptions.

	\begin{enumerate}
		\item Suppose that $g(B)>1$. Then $g\leq \frac{\chi(\mathcal{O}_S)}{g(B)-1}+1\leq \chi({\mathcal{O}_S})+1$
		by the semi-positivity of the Hodge bundle $f_*\mathcal{O}_S\big(K_{S/B}\big)$.
		Moreover, this bound is sharp since there are generalised hyperelliptic surfaces, whose Albanese
		map is a fibration of genus $g= \chi({\mathcal{O}_S})+1$ (cf. \cite{Cat00}).\vspace{2mm}
		
		\item Suppose that $g(B)=1$ and $K_S^2< 4\chi(\mathcal{O}_S)$. From the slope inequality \cite{CH88,Xiao87} it follows that $\frac{K_S^2}{\chi(\mathcal{O}_S)}\geq \frac{4(g-1)}{g}$, and hence
		$g\leq \frac{4\chi(\mathcal{O}_S)}{4\chi(\mathcal{O}_S)-K_S^2}\leq 4\chi(\mathcal{O}_S)$.
		Konno \cite{konno96} showed that $g\leq 6$ if moreover $S$ is an even canonical surface.\vspace{2mm}
		
		\item Suppose that $g(B)=1$ and $K_S^2 = 4\chi(\mathcal{O}_S)$.
		We showed that $g\leq \max\{6, 3\chi(\mathcal{O}_S)+1\}$, cf. \cite{ll25}.
% Ishida \cite{Ish05} proved that $g\leq 2\chi(\mathcal{O}_S)+2$ if moreover $f$ is hyperelliptic.
See also \cite{Ish05} for the case when $f$ is moreover hyperelliptic, where it was proved that $g\leq 2\chi(\mathcal{O}_S)+2$.
	\end{enumerate}
	
	All the upper bounds above are linear in $\chi(\mathcal{O}_S)$.
	Our first aim is to give an upper bound on the genus $g$ of hyperelliptic Albanese fibrations of surfaces of general type with $K_S^2 > 4\chi(\mathcal{O}_S)$.
	\begin{theorem}\label{thm-upper1}
		Let $S$ be a minimal irregular surface of general type with   $q(S)=1$ and $K_S^2 > 4\chi(\mathcal{O}_S)$, and $f:\,S \to B$ its Albanese fibration whose general fiber is of genus $g$.
		Suppose that $f$ is hyperelliptic. Then
	\begin{equation}\label{eqn-upper-1}
g\leq \frac{25}{4}{\chi(\mathcal{O}_S)}^2+\frac{19}{2}\chi(\mathcal{O}_S)+2.
\end{equation}
	\end{theorem}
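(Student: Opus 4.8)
The plan is to combine the description of a hyperelliptic fibration as a double cover of a ruled surface with the Bogomolov--Miyaoka--Yau inequality: the hypothesis $K_S^2>4\chi(\mathcal O_S)$ will force the branch divisor to be singular, BMY will bound how singular it can be, and this in turn bounds $g$. First I would reduce to $g(B)=1$: if $g(B)\ge 2$, then item (1) of the Introduction already gives the much stronger bound $g\le\chi(\mathcal O_S)+1$. When $g(B)=1$ the base $B$ is elliptic, so $\omega_B\cong\mathcal O_B$, hence $K_S=K_{S/B}$, and (using $q(S)=g(B)=1$) $\chi_f=\deg f_*\omega_{S/B}=\chi(\mathcal O_S)\ge 1$; the hypothesis becomes $K_{S/B}^2>4\chi_f$.

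\emph{Double cover model.} Let $\tau$ be the relative hyperelliptic involution. After blowing up $S$ I may assume its lift has smooth fixed locus; the quotient is then smooth, carries a $\mathbb P^1$-fibration over $B$, and after contracting the vertical $(-1)$-curves becomes a relatively minimal ruled surface $W=\mathbb P(\mathcal E)\to B$ with minimal section $\Gamma$ ($e:=-\Gamma^2\ge -1$) and fiber $\Upsilon$. Then $S$ is the minimal resolution of the double cover of $W$ branched along an effective divisor $R\equiv(2g+2)\Gamma+b\Upsilon$. Setting $P:=b-(g+1)e$ and letting $2k_i$ or $2k_i+1$ be the multiplicities of the points blown up in the canonical resolution, the standard formulas for double covers (cf. \cite{Xiao87,bhpv}) give
\[
\chi_f=\tfrac g2\,P-\sum_i\binom{k_i}{2},\qquad
K_{S/B}^2=2(g-1)P-2\sum_i(k_i-1)^2+\delta ,
\]
where $\delta\ge 0$ counts the $(-1)$-curves contracted in passing to $S$; and $P\ge 1$ since $\chi_f>0$.

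\emph{The optimisation.} Subtracting the two identities gives $K_{S/B}^2-4\chi_f=2\sum_i(k_i-1)-2P+\delta$, so the hypothesis forces $2\sum_i(k_i-1)+\delta>2P$, i.e. $R$ is genuinely singular. Also $g=\bigl(2\chi_f+\sum_i k_i(k_i-1)\bigr)/P\le 2\chi_f+\sum_i k_i(k_i-1)$, so it is enough to bound $\sum_i k_i(k_i-1)$. Here I would invoke BMY: since $S$ is minimal of general type, $K_{S/B}^2=K_S^2\le 9\chi(\mathcal O_S)=9\chi_f$, which together with the identity above gives $2\sum_i(k_i-1)+\delta\le 5\chi_f+2P$. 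Then, using that $\sum_i k_i(k_i-1)$ is maximised (for fixed $\sum_i(k_i-1)$) by concentrating all the singularity into a single point, that effectivity of $R$ bounds every $k_i$ (so $k_i\le g+1$), and that $K_S^2\ge 1$ prevents $P$ from being large once $\chi_f,\delta$ are fixed, one is left with an elementary extremal problem; its optimum occurs at $P=1$, $\delta=0$ and one singular point with $k_1=m$, where BMY reads $2(m-1)\le 5\chi_f+2$, so
\[
g=2\chi_f+m(m-1)\le 2\chi_f+\frac{5\chi_f+4}{2}\cdot\frac{5\chi_f+2}{2}=\frac{25}{4}\chi_f^2+\frac{19}{2}\chi_f+2 .
\]

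\emph{Main obstacle.} The delicate part is making the last step rigorous: ruling out that a configuration with $P\ge 2$, with several or infinitely near singular points of $R$, or with $\delta>0$, could beat the single-point one. This requires playing the inequality $2\sum_i(k_i-1)+\delta\le 5\chi_f+2P$ off simultaneously against the effectivity and relative-minimality constraints on $(W,R)$ and against $K_S^2\ge 1$, and treating the exceptional ruled surface $e=-1$ and the small genera $g\in\{2,3\}$ separately, where the canonical-resolution bookkeeping degenerates.
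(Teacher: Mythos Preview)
Your overall strategy—reduce to $g(B)=1$, model the hyperelliptic fibration as a double cover of a ruled surface, and use BMY to constrain the singularities of the branch curve—is exactly the one the paper follows, and the arithmetic at the end, landing on $\frac{25}{4}\chi_f^2+\frac{19}{2}\chi_f+2$ via the single-point configuration with $P=1$, matches the paper's extremal case.

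However, the ``main obstacle'' you flag is a genuine gap, and the mechanism you propose for closing it does not work. In the single-point case with $\delta=0$ your own formulas give
\[
K_S^2 \;=\; 4\chi_f+2(m-1)-2P,
\]
so $K_S^2$ depends only on the \emph{difference} $m-P$, not on $P$ itself. Thus neither $K_S^2\ge1$ nor BMY ($K_S^2\le 9\chi_f$) bounds $P$; both merely bound $m-P$. With $m-P$ fixed and $P\to\infty$ one has $g=\bigl(2\chi_f+m(m-1)\bigr)/P\sim P$, so your constraints are compatible with arbitrarily large $g$. The effectivity bound $k_i\le g+1$ you invoke is also too weak: it yields $P\le \frac{2\chi_f}{m-1}+m$, roughly $P\le m$, again leaving $P$ and $m$ jointly unbounded.

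What the paper uses to close this gap is precisely a choice of model you did not make. Xiao's \autoref{lemma for psi} selects, among all relatively minimal ruled models, the one with $R^2$ minimal; on that model every singularity of $R$ has multiplicity at most $g+2$, i.e.\ $k_i\le\lfloor\frac{g+2}{2}\rfloor$ rather than $k_i\le g+1$. Encoding the singularities via Xiao's refined indices $s_i$, this leads (\autoref{lemma-maximal-n}) to the crucial bound
\[
n\;\le\;\Bigl(\tfrac{\lambda_f-4}{2}+\tfrac{\lambda_f}{g-1}\Bigr)\chi_f,
\]
hence $n\le 3\chi_f$ once $g\ge18$. With $n$ so bounded, the function $g(n)=\frac{25}{4n}\chi_f^2+\bigl(5+\frac{9}{2n}\bigr)\chi_f+n+1$ coming from \autoref{thm-sharp-upper-bound} is maximised at $n=1$, yielding the claimed inequality. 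In your language: first pass by elementary transformations to the model where all $k_i\le\frac{g+1}{2}$; only then is the single-point extremum you identified the global one.
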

% The above quadratic function can be explicitly written out, see \autoref{theorem upper bound for g} and \autoref{thm-sharp-upper-bound}.
	We will  construct in \autoref{sec-example} examples of minimal irregular surfaces of general type with $K_S^2 > 4\chi(\mathcal{O}_S)$ admitting hyperelliptic Albanese fibrations, whose general fiber is of genus $g$ as large as a quadratic function in $\chi(\mathcal{O}_S)$.
	This is different from the phenomenons when $K_S^2\leq 4\chi(\mathcal{O}_S)$, where the upper bounds are all linear in $\chi(\mathcal{O}_S)$ as mentioned above.
		
	\begin{remark}
		As mentioned above, the genus $g$ is a deformation invariant when $f$ is the Albanese fibration.
		Hence one may expect an upper bound on $g$ using invariants of $S$.
		In general, there is no such upper bound on $g$ if $f$ is not the Albanese fibration.
		In fact, Penegini-Polizzi (\cite{PP17}) constructed a minimal surface $S$ with $p_g(S)=q(S)=2,K_S^2=4$ (whose Albanese map is generically finite), on which there are fibrations $f_k:\,S \to B$, such that the genera of $f_k$'s can be arbitrarily large.

Nevertheless, in the case when $f:\,S \to B$ is hyperelliptic, the quadratic upper bound proved in \autoref{thm-upper1} holds true even if $f$ is not the Albanese fibration,
		and hence our results can be applied in a more general situation.
		See \autoref{theorem upper bound for g}
		and \autoref{thm-sharp-upper-bound} for more details.
	\end{remark}
	
	Another interest of ours is the classification of minimal irregular surface of general type with $\chi(\mathcal{O}_S)=1$.
	Although the value $\chi(\mathcal{O}_S)=1$ is the minimal possibility among surfaces of general type,
	the classification of such surfaces is widely open.
	Beauville proved in an appendix to \cite{deb82} that  $p_g(S)=q(S)\leq 4$ for such surfaces,
	and the equality holds if and only if $S$ is isomorphic to a product of two curves of genus $2$.
	
	We are more interested in the case when the Albanese map of $S$ induces a fibration.
	In this case, $p_g(S)=q(S)\leq 3$ by Beauville's result above.
	In a series of works \cite{ccm98,hp02,penegini11,pirola02,zucconi03}, a full classification has been obtained when $p_g(S)=q(S)= 3$ or $p_g(S)=q(S)=2$.
	However, the case when $p_g(S)=q(S)=1$ seems to be much more mysterious.
	An explicit upper bound on the genus $g$ of the Albanese fibration is still unknown,
	although theoretically such an upper bound exists since there are at most finitely many deformation equivalence classes of such surfaces.
	By \autoref{thm-upper1} above, $g\leq 17$ if the Albanese fibration is hyperelliptic.
In fact, we can even get more.
	\begin{theorem}\label{thm-upper-chi=1}
		Let $S$ be a minimal  surface of general type with $p_g(S)=q(S)=1$.
		Suppose that the Albanese fibration $f:\,S\to B$ of $S$ is hyperelliptic of genus $g$. Then $g\leq 14$. More precisely, it holds $g\leq 10$ except for the following two possible cases:
		
        \qquad \qquad {\rm (i)} $K^2=8,~g=14$; \qquad \qquad {\rm (ii)} $K^2=8,~g=11$.
	\end{theorem}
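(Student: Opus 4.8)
The plan is to split according to the value of $K_S^2$. Since $p_g(S)=q(S)=1$ we have $\chi(\mathcal O_S)=1$ and $\Alb(S)$ is an elliptic curve, so the base $B$ of the Albanese fibration has genus $1$ and $\chi_f=\chi(\mathcal O_S)=1$; as $S$ is minimal of general type, $2\le K_S^2\le 9$. When $K_S^2\le 4$ the bound $g\le 10$ already follows from the estimates recalled in the introduction: for $K_S^2<4$ the slope inequality gives $g\le\tfrac{4\chi(\mathcal O_S)}{4\chi(\mathcal O_S)-K_S^2}\le 4$, and for $K_S^2=4$ Ishida's theorem gives $g\le 2\chi(\mathcal O_S)+2=4$. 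So the content of the statement lies in the range $5\le K_S^2\le 9$, where I sharpen the double-cover analysis behind \autoref{thm-upper1}.

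For $5\le K_S^2\le 9$, I would reuse the construction from the proof of \autoref{thm-upper1}: pass to the relative canonical model of $f$, quotient by the fibrewise hyperelliptic involution, and take the canonical resolution, obtaining a generically $2{:}1$ morphism $\pi\colon\widetilde S\to\widetilde Y$, where $\widetilde Y$ is a blow-up of a relatively minimal ruled surface $Y_0=\mathbb P(\mathcal E)\to B$ (with $\mathcal E$ normalised of invariant $e$, minimal section $C_0$, fibre $\Gamma$), the branch divisor lies in $|2L_0|$ with $L_0\equiv(g+1)C_0+b'\Gamma$, and $\widetilde S$ dominates $S$ by contracting $c\ge 0$ $(-1)$-curves. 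Writing $m_i$ for the multiplicities of the successively blown-up (possibly infinitely near) points of the branch curve and setting $M_1=\sum(m_i-1)$, $M_2=\sum m_i(m_i-1)$, the computation of $\chi(\mathcal O_S)$, $K_S^2$ and $e(S)$ through the double cover yields identities of the form
\[
gN=2\chi_f+M_2,\qquad K_S^2=4\chi_f-2N+2M_1+c,\qquad N:=2b'-e(g+1),
\]
here with $\chi_f=1$. Feeding in $p_g(S)=1$ via Riemann--Roch on $Y_0$ then tightly constrains the branch data; in particular it forces $H^1\!\bigl(Y_0,K_{Y_0}+L_0\bigr)=0$ and, after a short analysis of the cases for $\mathcal E$, one is reduced to $e=0$, so that $N=2b'$ is even and $N\ge 2$. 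Moreover $S$ of general type gives $K_{\widetilde S}^2=K_S^2-c>0$, i.e.\ $M_1>N-2$, while $K_S^2\le 9$ forces $M_1\le\tfrac52+N-\tfrac c2$; thus $M_1$ is pinned to within a bounded distance of $N$.

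At this point the problem is a finite Diophantine one. From $gN=2+M_2$, the bound $M_2=M_1+\sum(m_i-1)^2\le M_1(M_1+1)$ (with equality only when a single point is blown up), the evenness of $M_2$, the bound $m_i\le g+1$ (a branch point on a fibre has multiplicity $\le B_0\cdot\Gamma=2(g+1)$), and the constraint linking $M_1$ to $N$, one sees that in the extremal configuration for $K_S^2=8$ (single blown-up point, $c=0$, hence $M_1=N+2$) the relation reads $gN=N^2+5N+8$, whence $N\mid 8$ and $g\le 14$, realised by $N=8$. To bound $g$ in general one needs in addition the geometric input that near-extremal branch multiplicities — and $(-2)$-curves in the branch — force extra $(-1)$-curves on $\widetilde S$, so that $c$ is not a free parameter but is determined by the singularity configuration; with this, the admissible data $(N,c,\{m_i\})$ are finite in number for each $K_S^2$, and the ensuing check gives $g\le 10$ for $K_S^2\in\{5,6,7,9\}$ and, for $K_S^2=8$, leaves exactly $g=14$ (a single branch point of multiplicity $22$, $b'=4$) and $g=11$ (a single branch point of multiplicity $10$ or $14$, $b'\in\{1,2\}$); the intermediate values $g=12,13$ drop out because $\sum(m_i-1)^2=M_2-M_1$ is then not representable by any partition of $M_1$ of the required size. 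Together with the case $K_S^2\le 4$ this gives the theorem.

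The step I expect to be the genuine obstacle is the last one, and within it the honest control of the contraction number $c$: one must pin down exactly when the canonical resolution of the double cover produces $(-1)$-curves, how many, and how this couples to the multiplicities $m_i$, together with checking that the branch configurations singled out above really occur on the relevant ruled surface. This is precisely the input that keeps $b'$ (equivalently $N$) bounded and makes the Diophantine analysis finite, and carrying it out uniformly over $5\le K_S^2\le 9$ — rather than only in the single worst case used for \autoref{thm-upper1} — is where the bulk of the work lies.
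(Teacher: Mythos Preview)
Your approach has a genuine gap at exactly the step you yourself flag: without controlling $c$, nothing bounds $N$. The claim $K_{\widetilde S}^2=K_S^2-c>0$ is not valid --- $\widetilde S$ is obtained from the minimal $S$ by blowing up the isolated fixed points of the involution, so $K_{\widetilde S}^2$ can be arbitrarily negative --- and hence your lower bound $M_1>N-2$ is unsupported. Even granting that $M_1$ lies within $O(1)$ of $N$, the relation $gN=2+M_2\le 2+M_1(M_1+1)$ only yields $g\le N+6+O(1/N)$, which tends to infinity with $N$; the divisibility $N\mid 8$ that you invoke arises only in the single configuration $c=0$ with one blow-up, and you give no argument that this configuration maximises $g$ among all admissible ones. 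The reduction to $e=0$ is also not justified. Finally, your multiplicity bookkeeping does not see isolated vertical $(-2)$-curves in the branch locus, which do occur and alter the double-cover formulas.

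The paper bypasses all of this by working with Xiao's singularity indices $s_i$ rather than raw multiplicities and a separate contraction number. The point is that \autoref{key lemma of Xiao} expresses $\chi_f$ and $K_f^2$ --- invariants of the \emph{relatively minimal} model --- directly in the $s_i$'s, so there is no $c$ to control. One first disposes of the case $s_2<0$: an isolated vertical $(-2)$-curve in the branch produces a rational curve on $S$, Miyaoka's orbifold inequality then forces $K_S^2\le 7$, and a slope-type estimate gives $g\le 5$. For $s_2\ge 0$ the Miyaoka--Yau bound $K_f^2\le 9\chi_f$, fed into \eqref{(1.25g+1)chi_f} with $\chi_f=1$, immediately kills almost all the $s_i$: for $g\ge 13$ only $s_2,s_3,s_4,s_6,s_8,s_{10}$ can be nonzero. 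The second expression for $\chi_f$ in \autoref{key lemma of Xiao} then becomes the single Diophantine equation
\[
2g+1=\tfrac{g}{4}s_2+(g-1)(s_3+s_4)+\tfrac32(g-2)s_6+2(g-3)s_8+\tfrac52(g-4)s_{10},
\]
and a short case analysis (using also the companion equation $\tfrac{gn}{2}-1=s_3+s_4+3s_6+6s_8+10s_{10}$ with $n\in\mathbb Z$) leaves only $(K^2,g)=(8,14)$ for $g\ge 13$; an analogous check rules out $g=12$ and shows that $g=11$ forces $K^2=8$.
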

	
	The known examples of surfaces with $p_g(S)=q(S)=1$ are mostly restricted to the region $g\leq K_S^2$.
	The first example with $g>K_S^2$ was constructed in \cite{fp15}, in which $g=7$ and $K_S^2=6$.
	More recently, an example with $g=19$ and $K_S^2=9$ was constructed in \cite{cky17}.
	By our result above, the Albanese fibration of the example in \cite{cky17} must be non-hyperelliptic.

%{\color{red}
%\begin{question}
%  Does there exist minimal surfaces of general type with $p_g(S)=q(S)=1$, $K^2=8$ whose Albanese map is a hyperelliptic fibration of genus 11 or 14 ?
%\end{question}
%}	
	\vspace{2ex}
The paper is organized as follows.
In \autoref{sec-preliminaries}, we mainly review some basic facts about surface fibrations and do some technical preparations.
In  \autoref{sec-upper-hyperelliptic} and \autoref{sec-pg=q=1}, we prove
\autoref{thm-upper1}  and \autoref{thm-upper-chi=1} respectively.
Finally in \autoref{sec-example}, we construct examples of irregular surfaces of general type
with hyperelliptic fibrations of genus $g$ as large as a quadratic function in $\chi(\mathcal{O}_S)$.

%{\red   ????. In section 2, we review some basic properties about a hyperelliptic fibration $f:S\rightarrow B$ mainly due to Xiao Gang, including the definitions for local relative invariants $s_i$ of $f$ and some theorems about the relationship between these local invariants $s_i$ and the global invariants of $f$. Then   we  give some lemmas related to  these local invariants $s_i$ that will be used in  proof of our main theorems. In section 3 we  study the special case $p_g=q=1$ and prove Theorem \ref{theorem 1}, using the results in second 2. We also give a possible list of pairs $(K^2,g)$ under the assumption that the general Albanese fiber is hyperelliptic.    In section 4, we proveTheorem \ref{theorem 2}, which gives a quadric bound for $g$ in $\chi(\mathcal{O}_S)$. We also show that  this bound in fact  depends on the slope of $f$. In the last section,
%we show that there exists a family of surfaces (with hyperelliptic Albanese fibrations) whose Albanese genus grows quadratically as $\chi(\mathcal{O}_S)$ and  $K^2$, which proves Theorem  \ref{theorem 3} and implies that the best upper bound for $g$ would be a quadric polynomial in $\chi(\mathcal{O}_S)$ and  $K^2$.	}

\vspace{2ex}	
{\bf Acknowledgments.}
We are grateful to the referees for valuable comments and suggestions,  which make our paper more clear.

\section{Preliminaries} \label{sec-preliminaries}
In this section, we mainly review some basic facts and fix the notations.
In \autoref{sec-fibration}, we recall some general facts about  surface fibrations, and refer to \cite{bhpv} for more details;
and then in \autoref{sec-hyperelliptic} we restrict ourselves to the theory on the hyperelliptic fibrations,
which goes back to Horikawa and Xiao, cf. \cite{hor77,Xiao91,Xiao92}.
\subsection{The surface fibrations}\label{sec-fibration}
Let $f:S\rightarrow B$ be a fibration of curves of genus $g\geq 2$, i.e. $f$ is a proper morphism from a smooth projective surface $S$ onto a smooth projective curve $B$ with connected fibers, and the general fiber is a smooth projective curve of genus $g$. The fibration $f$ is called {\em relatively minimal} if there is no $(-1)$-curve (i.e. a smooth rational curve with self-intersection $-1$) contained in the fibers of $f$. It is called {\em hyperelliptic} if its general fiber   is hyperelliptic, {\em smooth} if all its fibers are smooth, {\em isotrivial} if all its smooth fibers are mutually isomorphic, and {\em locally trivial} if it is both smooth and isotrivial.
	
	Let $\omega_S$ (resp. $K_S$) be the canonical sheaf (resp. canonical divisor) of $S$, and let $\omega_{S/B}=\omega_S\otimes f^*\omega_B^\vee$ (resp. $K_f=K_{S/B}=K_S-f^*K_B$) be the relative canonical sheaf (resp. the relative canonical divisor) of $f$. Put $b:=g(B)$, $p_g:=h^0(S,\omega_S)$, $q:=h^1(S,\omega_S)$, $\chi=\chi(\mathcal{O}_S):=p_g-q+1$, and let $e(S)$ be the topological Euler characteristic of $S$. The basic invariants of
	$f$ are:
	$$\begin{aligned}
	\chi_f&\,=\chi-(g-1)(b-1);\\
	K_f^2&\,=K_S^2-8(g-1)(b-1);\\
	e_f&\,=e(S)-4(g-1)(b-1).
	\end{aligned}$$
	These invariants satisfy the following properties:
	\begin{enumerate}
		\item $\chi_f=\deg f_*\omega_{S/B}$ is the degree of the Hodge bundle $f_*\omega_{S/B}$. Moreover $\chi_f\geq 0$, and the equality holds if and only if $f$ is locally trivial.
		\item When $f$ is relatively minimal, $K_f^2\geq 0$, and the equality holds if and only if $f$ is locally trivial.
		\item $e_f=\sum e_F$, where  $e_F:=e(F_{red})-(2-2g)$ for any fiber $F$,  $F_{red}$ is the reduced part of $F$ and $e(F_{red})$ is the topological Euler characteristic of $F_{red}$. Moreover, $e_F\geq 0$, and the equality holds if and only if $F$ is smooth. Hence $e_f\geq 0$, and $e_f=0$ if and only if  $f$ is smooth.
		\item The above three invariants satisfy the Noether equality: $12\chi_f=K_f^2+e_f$.
	\end{enumerate}
	
	Suppose that $f$ is relatively minimal and not locally trivial. Then both $K_f^2$ and $\chi_f$ are strictly positive. In this case, the {\em slope} of $f$ is defined to be $\lambda_f=\frac{K_f^2}{\chi_f}$.
	According to the non-negativity of these basic invariants of $f$,
	it holds $0<\lambda_f\leq 12$.
	The so-called slope inequality, proved independently by Cornalba-Harris \cite{CH88} and Xiao \cite{Xiao87}, states that
	\begin{equation}\label{eqn-slope}
		\lambda_f\geq \frac{4(g-1)}{g}.
	\end{equation}
	
	\subsection{Invariants of  hyperelliptic fibrations}\label{sec-hyperelliptic}
	In this subsection, we restrict to hyperelliptic fibrations.
	Let $f:S\rightarrow B$ be a relatively minimal hyperelliptic fibration. The relative canonical map of $f$ is generically of degree 2. Hence it determines an involution $\sigma$ on $S$ whose restriction on a general fiber $F$ of $f$ is the hyperelliptic involution of $F$.
	
	Let $\mu:\tilde{S}\rightarrow S$ be the blow-ups  of all the isolated fixed points of $\sigma$ and let $\tilde{\sigma}$ be the induced involution on $\tilde{S}$. The quotient space $\tilde{P}=\tilde{S}/\langle\tilde{\sigma}\rangle$ is a smooth surface, and $f$ induces a ruling $\tilde{h}:\tilde{P}\rightarrow B$ on $\tilde{P}$. Also, the quotient map $\tilde{\pi}: \tilde{S}\rightarrow \tilde{P}$ is a double cover which is determined by the pair $(\tilde{R},\tilde{\delta})$, where $\tilde{R}$ is the branch locus of $\tilde{\pi}$ and $\tilde{\delta}$ is a line bundle such that $\mathcal{O}_{\tilde{P}}(\tilde{R})\cong \tilde{\delta}^{\otimes 2}$ and $\tilde{\pi}_*\mathcal{O}_{\tilde{S}}\cong \mathcal{O}_{\tilde{P}}\oplus \tilde{\delta}^\vee$. (see  \cite{bhpv} Chapter V.22)
	
	For any contraction $\varphi: \tilde{P}\rightarrow P'$ of $(-1)$-curves and $R'=\varphi(\tilde{R})$, the double cover $\tilde{\pi}$ induces a double cover $S'\rightarrow P'$, which is  determined by $(R',\delta')$. We call $(R',\delta')$ the image of $(\tilde{R},\tilde{\delta})$.
	
	\begin{lemma}[\cite{Xiao91,Xiao92}]
		\label{lemma for psi}
		There exists a contraction of a ruled surfaces $\psi: \tilde{P}\rightarrow P$:
		$$\xymatrix{\tilde{P} \ar[rr]^{\psi} \ar[rrd]^{\tilde{h}} && P\ar[d]^h\\
			~ && B}
		$$
		such that $P$ is a geometrical ruled surface (i.e. any fiber of $h$ is $\mathbb{P}^1$), the singularities of $R$ are at most of multiplicity $g+2$, and the self-intersection $R^2$ is the smallest among all such choices, where $(R,\delta)$ is the image of $(\tilde{R},\tilde{\delta})$ in $P$.
	\end{lemma}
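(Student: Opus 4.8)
The plan is to follow the image of the branch divisor $\tilde{R}$ through successive contractions of vertical $(-1)$-curves. Since $\tilde{h}:\tilde{P}\to B$ is a ruled surface whose general fibre is $\mathbb{P}^{1}$, its reducible fibres are reduced trees of smooth rational curves, and by Castelnuovo contractibility one reaches a geometrically ruled surface by contracting such curves one at a time; as each contraction drops the Picard number by one, every composite contraction $\psi:\tilde{P}\to P$ onto a geometrically ruled $P$ consists of exactly $\rho(\tilde{P})-2$ blow-downs, all vertical over $B$. Hence there are only finitely many such $\psi$, and finitely many possible images $(R,\delta)$ of $(\tilde{R},\tilde{\delta})$.

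The behaviour of the branch divisor under one blow-down is governed by two facts. First, $\tilde{R}$ is smooth, being the image under $\tilde{\pi}$ of the smooth fixed divisor of $\tilde{\sigma}$; so contracting a vertical $(-1)$-curve $E$ produces on the image an \emph{ordinary} point of multiplicity $m_{E}:=\tilde{R}\cdot E$, and $R^{2}$ grows exactly by $m_{E}^{2}$, giving $R^{2}=\tilde{R}^{2}+\sum m_{i}^{2}$ over all the curves blown down. Second, $\mathcal{O}_{\tilde{P}}(\tilde{R})\cong\tilde{\delta}^{\otimes 2}$ forces $\tilde{R}\cdot C$ to be even for every curve $C$; in particular, by smoothness of $\tilde{R}$ one cannot have $E\subset\tilde{R}$ (that would give $\tilde{R}\cdot E=E^{2}=-1$, odd), so each $m_{E}$ is a nonnegative even integer, and moreover $m_{E}>0$: a vertical $(-1)$-curve disjoint from $\tilde{R}$ would pull back to a pair of $(-1)$-curves in a fibre of $\tilde{f}:=f\circ\mu$ that are interchanged by $\tilde{\sigma}$ (hence not $\mu$-exceptional, since the $\mu$-exceptional curves lie in the fixed locus of $\tilde{\sigma}$), contradicting the relative minimality of $f$.

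The crux — and the main obstacle — is to choose the order of contractions so that no created singular point has multiplicity exceeding $g+2$; this is in essence Xiao's analysis of the degenerate fibres of hyperelliptic pencils. I would argue fibre by fibre. For a reducible fibre $\tilde{F}_{0}$ of $\tilde{h}$ one has $\tilde{R}\cdot\tilde{F}_{0}=2g+2$, while $\tilde{\pi}^{-1}(\tilde{F}_{0})$ is a fibre of $\tilde{f}$, so its self-intersection is $0$, its arithmetic genus is $g$, and away from the $\mu$-exceptional curves it contains no $(-1)$-curve. Expanding these conditions in terms of the intersection numbers $\tilde{R}\cdot C$ and the self-intersections of the components $C$ of $\tilde{F}_{0}$ rules out exactly the configurations in which a $(-1)$-curve forced to be contracted first would meet $\tilde{R}$ in more than $g+2$ points; then, by always contracting a tip of the tree meeting the current image of $\tilde{R}$ in the fewest points, and using $\tilde{R}\cdot\tilde{F}_{0}=2g+2$ to control how the intersection numbers propagate as the tree collapses, one produces a full contraction $\psi$ all of whose created singular points, hence all of whose singular points of $R$, have multiplicity at most $g+2$.

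Finally, the set of contractions $\psi:\tilde{P}\to P$ onto a geometrically ruled $P$ for which every singular point of $R=\psi(\tilde{R})$ has multiplicity at most $g+2$ is finite by the first step and nonempty by the previous one, so among them there is one for which $R^{2}=\tilde{R}^{2}+\sum m_{i}^{2}$ is smallest; this $\psi$ is the contraction asserted by the lemma.
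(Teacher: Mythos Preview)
The paper does not give its own proof of this lemma; it simply quotes the statement from Xiao \cite{Xiao91,Xiao92}. So there is no in-paper argument to compare against, and what matters is whether your sketch actually reproduces Xiao's result.

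Your reduction to a finite set of contractions, the smoothness of $\tilde R$, the parity constraint $\tilde R\cdot C\in 2\Zbb$, and the final ``pick a minimiser of $R^2$'' step are all fine. The gap is precisely where you flag it: the existence of \emph{some} contraction keeping every created multiplicity at most $g+2$. Your greedy proposal ``always contract a tip of the tree meeting the current image of $\tilde R$ in the fewest points'' is not a proof as stated, for two reasons. First, tips of the fibre-tree need not be $(-1)$-curves: already the fibre $C_1+2C_2+C_3$ (obtained by blowing up a node of a two-component fibre) has the tips $C_1,C_3$ of self-intersection $-2$ and the unique $(-1)$-curve $C_2$ in the middle, so the contraction order is forced and your heuristic does not apply. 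Second, $\tilde R$ can contain vertical components (the images of the $\mu$-exceptional curves are $(-2)$-curves lying in $\tilde R$), and for such a component $C$ one has $\tilde R\cdot C=C^2<0$; this spoils the naive bound ``$\sum a_i(\tilde R\cdot C_i)=2g+2$ with all terms nonnegative'' that your propagation argument implicitly uses.

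The argument Xiao actually runs is organised differently: one first contracts to \emph{some} geometrically ruled $P$, and then repeatedly performs an elementary transformation centred at any point of $R$ of multiplicity $m>g+1$. Such a transformation replaces that singularity by one of multiplicity $2g+2-m\le g$ and changes $R^2$ by $(2g+2-m)^2-m^2=-(2g+2)(2m-2g-2)<0$. Since $R^2$ is bounded below (each contraction only increases $\tilde R^{\,2}$), the process terminates with all multiplicities $\le g+2$, and one then minimises $R^2$ among the finitely many resulting models. If you want to keep your ``contract from $\tilde P$'' viewpoint, you would need to reinterpret these elementary transformations as choosing a different order of blow-downs inside $\tilde P$, which is possible but is exactly the bookkeeping you left undone.
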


	The contraction $\psi$ in \autoref{lemma for psi} can be decomposed  into $\tilde{\psi}:\tilde{P}\rightarrow \hat{P}$ and $\hat{\psi}: \hat{P}\rightarrow P$, where $\tilde{\psi}$ and $\hat{\psi}$ are composed respectively of resolutions of negligible and non-negligible singularities of $R$, cf. \cite[Def\,4]{Xiao91} or \cite[Def\,2.1]{LZ17}.	
	Let $(\hat{R},\hat{L})$ be the image of $(\tilde{R},\tilde{L})$ in $\hat{P}$. Let $\hat{\psi}=\hat{\psi}_1\circ \dotsm \circ \hat{\psi}_t$ be the decomposition of $\hat{\psi}$, where $\hat{\psi}_i: \hat{P}_i\rightarrow \hat{P}_{i-1}$ is the blow-up at $y_{i-1}$, $\hat{P}_0=P$ and $\hat{P}_t=\hat{P}$.
	Let $\hat{R}_i$ be the image of $\hat{R}$ in $\hat{P}_i$. We remark that the decomposition of $\hat{\psi}$ is not unique.
	If $y_{i-1}$ is a singular point of $\hat{R}_{i-1}$ of odd multiplicity $2k+1~(k\geq 1)$ and there is a unique singular point $y$ of $\hat{R}_i$ on the exceptional curve $\hat{\mathcal{E}}_i$ of multiplicity $2k+2$, then we always assume that $\hat{\psi}_{i+1}: \hat{P}_{i+1}\rightarrow \hat{P}_i$ is the standard blow-up at $y_i=y$. We call such a pair $(y_{i-1},y_i)$ a singularity of type  $(2k+1\rightarrow 2k+1)$ and call $y_{i-1}$ (resp. $y_i$) the first (resp. second) component of such a singularity.
	
	\begin{definition}
		For any singular fiber $F$ of $f$ and $3\leq i\leq g+2$, the $i$-th singularity index of $F$ is defined as follows (with respect to the contraction $\psi$):
		
		(1) if $i$ is odd, $s_i(F)$ equals the number of $(i\rightarrow i)$ type  singularities of $R$ over the image $f(F)$;
		
		(2) if   $i$ is even, $s_i(F)$ equals the number of singularities of multiplicity $i$ or $i+1$ of $R$ over the  image $f(F)$, neither belonging to the second component of $(i-1\rightarrow i-1)$ type singularities nor to the first component of $(i+1\rightarrow i+1)$ type singularities.
	\end{definition}

	Here we give an example to help to understand the above singularity indices.
	\begin{example}\label{example-singularity}
		Suppose that $F$ is a singular fiber of $f$, and $\Gamma$ is its corresponding fiber in the geometrical ruled surface $h:\,P \to B$.
		Let $t$ be a local coordinate of $B$ around $b=h(\Gamma)$,
		and $x$ be an affine fiber-coordinate of the $\bbp^1$-bundle over $B$.
		
		%Let $p\in R\subset  P$ be a singularity of $R$. Set $b:=h(p)$,
		% $\Gamma_b:=h^{-1}(b) \cong \mathbb{P}^1$ and  $F_b:=f^{-1}(b)$.
		% Take a unit disk $\Delta\subset B$,
		% and let $t$ be the local parameter of $\Delta$, $x$ be a local
		%parameter of $\Gamma_b$ near $p$.
		\begin{enumerate}
			\item Suppose that the local equation of $R$ over $b$ is $(t^{2k}-x^{2k})(x^{2g+2-2k}-1)=0$ with $2\leq k\leq \big[\frac{g+1}{2}\big]$.
			Then $p$ is the unique singularity of $R$ on $\Gamma$ of multiplicity $2k$.
			In this case, $s_{2k}(F)=1$ and $s_i(F)=0$ for all other $i\geq 3$.
			
			\item Suppose that the local equation of $R$ over $b$ is $(t^{2(2k+1)}-x^{2k+1})(x^{2g+1-2k}-1)=0$ with $1\leq k\leq \big[\frac{g}{2}\big]$.
			Then $p$ is  a singularity of $R$ on $\Gamma$ contained in a singularity of type $(2k+1 \to 2k+1)$.
			In this case, $s_{2k+1}(F)=1$ and $s_i(F)=0$ for all other $i\geq 3$.

			\item Suppose that $g$ is odd, and that the local equation of $R$ over $b$ is $t(t^{g+1}-x^{2(g+1)})=0$.
			Then $p$ is also the unique singularity of $R$ on $\Gamma$.
			In this case, $s_{g+2}(F)=1$ and $s_i(F)=0$ for all other $i\geq 3$.
		\end{enumerate}
	\end{example}

We remark that the minimality of $R^2$ implies that $s_{g+2}(F)=0$ if $g$ is even (c.f. \cite[Thm\,2.6 and its proof]{LZ17})

	Let $K_{\hat{P}/B}=K_{\hat{P}}-\hat{h}^*K_B$ and $R'={\hat{R}}\setminus {\hat{V}}$, where $\hat{V}$ is the union of isolated vertical $(-2)$-curves in $\hat{R}$.
	Here an irreducible curve $C\subset \hat{R}$ is said to be   isolated in $R$ if there is no other irreducible curve $C'\subset \hat{R}$ such that $C\cap C'\neq\emptyset$.
	We define
	
	$$s_2\triangleq (K_{\hat{P}/B}+R')\cdot R' \qquad \text{and} \qquad s_i \triangleq \sum_{F ~\text{is singular}} s_i(F), ~~3\leq i\leq g+2.$$	
	Note that  $s_i$ is nonnegative for $i\geq 3$ by definition, but the singularity index $s_2$ might be negative,
	cf. \cite{ggl25}.
	
	\begin{lemma}[\cite{Xiao91,Xiao92}]
		These singularity indices $s_i$'s defined above are independent on the choices of $\psi$ in \autoref{lemma for psi}.
	\end{lemma}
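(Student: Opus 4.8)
The strategy is to reduce the independence of the $s_i$'s to a local statement about how the branch curve transforms under elementary transformations of geometrical ruled surfaces, and then to check that each such transformation leaves every singularity index unchanged. Recall first that in \autoref{lemma for psi} the surface $\tilde P=\tilde S/\langle\tilde\sigma\rangle$ and the curve $\tilde R$ depend only on $f$, not on $\psi$; what varies is the contraction $\psi\colon\tilde P\to P$, and with it $P$ and the image $(R,\delta)$. So let $\psi\colon\tilde P\to P$ and $\psi'\colon\tilde P\to P'$ be two contractions as in \autoref{lemma for psi}, with images $(R,\delta)$, $(R',\delta')$, and with $R^2=(R')^2$ the common minimal value. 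The induced birational map $\phi=\psi'\circ\psi^{-1}$ from $P$ to $P'$ is a birational map over $B$ carrying $R$ onto $R'$, and, since $P$ and $P'$ are geometrical ruled surfaces over $B$, it factors as a finite composition of elementary transformations, each centred at a point of some fibre of the ruling. One checks (elementary transformations over distinct points of $B$ commute, and the sequence can be reorganized) that the factorization may be chosen so that every ruled surface occurring in it is again \emph{admissible}, i.e. geometrical ruled over $B$ with branch curve of minimal self-intersection and singularities of multiplicity $\le g+2$; in particular each elementary transformation in it joins two admissible configurations.

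The heart of the matter is then the following local claim. Let $\mathrm{elm}_p\colon P_0\to P_1$ be an elementary transformation between admissible configurations; write it as the blow-up at $p$ followed by the contraction of the strict transform $\tilde L$ of the fibre $L\ni p$, and let $m$ be the multiplicity at $p$ of the branch curve $R_0\subset P_0$. Since $R_0\cdot L=2g+2$, a direct self-intersection computation on the blow-up (the exceptional curve having to be added to the branch divisor exactly when $m$ is odd) shows that the branch self-intersection changes by $4(g+1)(g+1-m)$ when $m$ is even and by $4(g+1)(g+2-m)$ when $m$ is odd. As $m\le g+2$, this change is non-negative, and minimality forces it to vanish, so $m=g+1$ ($m$ even, hence $g$ odd) or $m=g+2$ ($m$ odd, hence $g$ odd). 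For such $p$ the transformation is an isomorphism away from $L$, so it does not affect any singularity of the branch curve lying off $L$; on $L$ it exchanges the ``inner'' and ``outer'' branches of the branch curve at $p$, and, running the canonical resolution of \autoref{lemma for psi} and one of its admissible decompositions on both sides, one checks that this sets up a bijection between the singular points of $R_0$ and of $R_1$ over the relevant fibre preserving multiplicities, the types $(2k+1\to 2k+1)$, and hence every count $s_i(F)$ with $3\le i\le g+2$. (When $g$ is even no value of $m$ has the right parity, so no such elementary transformation exists, $P$ is unique up to an isomorphism respecting $R$, and this matches the remark that $s_{g+2}=0$ for $g$ even.) This proves that $s_i$ is independent of $\psi$ for all $i\ge 3$.

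It remains to treat $s_2=(K_{\hat P/B}+R')\cdot R'$. One argues in the same way: the elementary transformations above, being centred at a point of multiplicity $g+1$ or $g+2$, also leave $(K_{\hat P/B}+R')\cdot R'$ unchanged, by an intersection-number bookkeeping on the modified surface parallel to the computation for $R^2$. Alternatively, one invokes the formula of \cite{Xiao91,Xiao92} expressing the intrinsic invariant $K_f^2$ as a fixed linear combination of the $s_i$'s in which $s_2$ has nonzero coefficient, and solves for $s_2$, all the other $s_i$'s being already known to be $\psi$-independent.

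The one genuinely delicate point is the local case analysis in the second paragraph: one must run through the possible analytic configurations of the branch curve along the fibre $L$ --- how the singularity at $p$ interacts with the other singularities on $L$, the distinction between negligible and non-negligible singularities, and the ordering conventions in the decomposition of $\hat\psi$ --- and verify in each case that the resulting correspondence of singular points preserves their types. Everything else is formal.
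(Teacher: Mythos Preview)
The paper does not supply its own proof of this lemma; it is stated with a citation to \cite{Xiao91,Xiao92}, and the argument lives in those references. Your sketch does follow the strategy one finds there: factor the birational map between the two models $P$ and $P'$ into elementary transformations over $B$ and check that each one preserves every $s_i$. Your computation of the change in $R^2$ under an elementary transformation at a point of multiplicity $m$ is correct, as is the parity analysis forcing $m\in\{g+1,g+2\}$ when $g$ is odd and ruling out any such transformation when $g$ is even. The alternative route you give for $s_2$, solving for it from the formula for $K_f^2$ in \autoref{key lemma of Xiao} once the $s_i$ with $i\ge 3$ are known to be $\psi$-independent, is valid and clean.

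There is, however, a genuine gap in your reduction step. The parenthetical claim that the factorization can be reorganized so that every intermediate ruled surface is admissible is not justified by the observation that elementary transformations over distinct fibres commute. The issue is local over a single fibre: if $\psi$ and $\psi'$ retain different components $C$ and $C'$ of the fibre tree of $\tilde h$ over some $b\in B$, the path from $C$ to $C'$ in that tree is unique, and what you need is that \emph{every} component along this path yields a branch curve with all multiplicities at most $g+2$. Reordering across different fibres does nothing for this. Filling the gap requires a separate local argument about how multiplicities are distributed along the tree (using that both endpoints already realise the minimal $R^2$), and this is not automatic. You rightly flag the ``delicate local case analysis'' of how an elementary transformation at a point of multiplicity $g+1$ or $g+2$ permutes singularity types as the other substantive step; but the admissibility-of-intermediates claim you treat as routine, and it is not.
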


	According to \cite[page 604]{Xiao91},  we can write
	$R\sim -(g+1)K_{P/B}+nF$    where `$\sim$' stands for numerical equivalence, and \begin{equation}\label{eqn-def-n}
		n=\frac{R^2}{4(g+1)},
	\end{equation}
	which is an integer.
	The following formulas for hyperelliptic fibrations are due to Xiao; we refer to \cite[Thm\,1]{Xiao91} or \cite[Thm\,2.6]{LZ17} for a proof.
	
	\begin{theorem}	\label{key lemma of Xiao}
		Let $f:S\rightarrow B$ be a hyperelliptic fibration of  genus $g\geq 2$, and let $s_i$'s be the singularity indices as above. Then
		$$\begin{aligned}
		\chi_f&\,=\frac{1}{2}gn-\sum_{k=1}^{[\frac{g+1}{2}]}k^2s_{2k+1}-\sum_{k=2}^{[\frac{g+1}{2}]}\frac{k(k-1)}{2}s_{2k}\\
		&\,=\frac{gs_2+(g^2-2g-1)s_{g+2}}{4(2g+1)}+\sum_{k=1}^{[\frac{g}{2}]}\frac{k(g-k)}{2g+1}s_{2k+1}+\sum_{k=2}^{[\frac{g+1}{2}]}\frac{k(g-k+1)}{2(2g+1)}s_{2k};\\[4pt]
		K_f^2&\,=(2g-2)n+s_{g+2}-\sum_{k=1}^{[\frac{g+1}{2}]}(2k-1)^2s_{2k+1}-\sum_{k=2}^{[\frac{g+1}{2}]}2(k-1)^2s_{2k}\\
		&\,=\frac{(g-1)\big(s_2+(3g+1)s_{g+2}\big)}{2g+1}+\sum_{k=1}^{[\frac{g}{2}]}\frac{12k(g-k)-2g-1}{2g+1}s_{2k+1}+\sum_{k=2}^{[\frac{g+1}{2}]}\frac{6k(g-k+1)-4g-2}{2g+1}s_{2k};\\[4pt]
		e_f&\,=s_2-2s_{g+2}+\sum_{k=1}^{[\frac{g}{2}]}s_{2k+1}+2\sum_{k=2}^{[\frac{g+1}{2}]}s_{2k}.
		\end{aligned}$$
	\end{theorem}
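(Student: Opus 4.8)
The plan is to prove the three formulas of \autoref{key lemma of Xiao} by a direct computation on the double cover $\hat{\pi}:\hat{S}\to \hat{P}$ associated to the relatively minimal model of the branch data, following the approach of Xiao. The starting point is the standard formulas for a double cover $\hat\pi:\hat S\to\hat P$ branched along a smooth divisor $\hat R\sim 2\hat\delta$, namely
\begin{equation*}
\chi(\mathcal{O}_{\hat S})=2\chi(\mathcal{O}_{\hat P})+\tfrac12\hat\delta\cdot(\hat\delta+K_{\hat P}),\qquad K_{\hat S}=\hat\pi^*(K_{\hat P}+\hat\delta).
\end{equation*}
From these one obtains expressions for $\chi(\mathcal{O}_{\hat S})$, $K_{\hat S}^2$, and (via Noether) $\chi_{\mathrm{top}}(\hat S)$ purely in terms of intersection numbers on the geometric ruled surface $\hat P$. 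Since $\hat S\to \hat P$ is obtained from $\tilde S\to\tilde P$, which is in turn obtained from $S$ by blowing up the isolated fixed points of $\sigma$, I would then track carefully how $\chi_f$, $K_f^2$, $e_f$ change under the blow-ups $\tilde\psi$ and $\hat\psi$ relating the various models; each blow-up contributes a controlled correction.

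The core of the argument is to convert the global intersection numbers on $\hat P$ into the local singularity indices $s_i$. First I would use the numerical class $R\sim -(g+1)K_{P/B}+nF$ on the geometric ruled surface $P$, together with the relation \eqref{eqn-def-n}, to compute the ``smooth-model'' contributions to $\chi_f$, $K_f^2$, $e_f$ as the leading terms $\tfrac12 gn$ and $(2g-2)n$. Then I would analyze, blow-up by blow-up in the decomposition $\hat\psi=\hat\psi_1\circ\dotsm\circ\hat\psi_t$, how resolving a singular point $y_{i-1}$ of $\hat R_{i-1}$ of multiplicity $i$ (or $i+1$) alters each invariant. Here the bookkeeping must respect the definition of $s_i(F)$: a singularity of type $(2k+1\to 2k+1)$ is resolved by two successive standard blow-ups and must be counted once, and the even-index $s_{2k}$ explicitly excludes the second component of $(2k-1\to 2k-1)$ and the first component of $(2k+1\to 2k+1)$ singularities. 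Matching these local contributions against the stated coefficients $k^2$, $(2k-1)^2$, $\tfrac{k(k-1)}2$, $2(k-1)^2$, and so on, yields the first expression for each of $\chi_f$, $K_f^2$, $e_f$.

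Once the first (geometric) form of each formula is established, the second form is obtained by purely algebraic manipulation: I would eliminate $n$ using the relation coming from $\chi_f=\tfrac12 gn-\cdots$, substitute it back into the expressions, and regroup the $s_i$ terms. This step is routine but requires care to verify that the coefficients combine to give the stated fractions with denominator $2g+1$; in particular the special role of $s_2$ and $s_{g+2}$ (and the vanishing $s_{g+2}=0$ when $g$ is even, noted after \autoref{lemma for psi}) must be handled consistently. The Noether identity $12\chi_f=K_f^2+e_f$ provides a valuable internal consistency check on all three formulas simultaneously.

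The main obstacle, I expect, will be the precise local analysis of the mixed odd/even singularities and the $(2k+1\to 2k+1)$ configurations: getting the counting conventions exactly right so that each resolution is attributed to the correct index $s_i$ without double-counting, and verifying that the local change in $K_f^2$ and in $e_f$ at each blow-up matches the claimed coefficients. This is where Xiao's original combinatorial argument is delicate, and where I would lean most heavily on the detailed case analysis in \cite{Xiao91,Xiao92} and the reformulation in \cite{LZ17}; the remainder of the proof is then a bounded bookkeeping computation.
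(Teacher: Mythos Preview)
The paper does not give its own proof of this theorem: it simply states the formulas and refers the reader to \cite[Thm\,1]{Xiao91} or \cite[Thm\,2.6]{LZ17} for a proof. So there is no argument in the paper to compare your proposal against. Your outline --- compute the double-cover invariants on $\hat P$, express them via $R\sim -(g+1)K_{P/B}+nF$ to get the leading terms, then track the corrections blow-up by blow-up through $\hat\psi$ using the $s_i$ bookkeeping, and finally eliminate $n$ algebraically --- is exactly the strategy carried out in those references, so your plan is consistent with the cited proofs rather than with anything the present paper does.
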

	
	Suppose that $f$ is not locally trivial. 	
	Then the above formulas imply in particular that
	\begin{equation}\label{eqn-4-2}
	\begin{aligned}
	&\left(\lambda_f-\frac{4(g-1)}{g}\right)\chi_f =K_f^2-\frac{4(g-1)}{g}\chi_f\\
	&=\frac{g^2-1}{g}s_{g+2}+\sum_{k=1}^{[\frac{g}{2}]}\frac{4k(g-k)-g}{g}s_{2k+1}+\sum_{k=2}^{[\frac{g+1}{2}]}\frac{2k(g-k+1)-2g}{g}s_{2k}.
	\end{aligned}
	\end{equation}
	
	\begin{corollary}[\cite{Xiao92,lt13}]
		Let $f:S\rightarrow B$ be a not locally trivial hyperelliptic fibration of  genus $g\geq 2$. Then the slope $\lambda_f$ satisfies
		\begin{equation}\label{eqn-slope-hyperelliptic}
			\frac{4(g-1)}{g} \leq \lambda_f \leq \left\{\begin{aligned}
			&12-\frac{8g+4}{g^2}, &~& \text{~if $g$ is even}\\
			&12-\frac{8g+4}{g^2-1}, && \text{~if $g$ is odd}
			\end{aligned}\right\}
			<12.
		\end{equation}
	\end{corollary}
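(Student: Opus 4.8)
The plan is as follows. The lower bound in \eqref{eqn-slope-hyperelliptic} is exactly the slope inequality \eqref{eqn-slope}, so the whole issue is the upper bound. I would first rewrite the target constant $\lambda_{\max}$ (the right-hand side of \eqref{eqn-slope-hyperelliptic}) uniformly as $\lambda_{\max}=12-\frac{2g+1}{[g/2]\,(g-[g/2])}$, i.e.\ $12$ minus the reciprocal of the largest possible value of $k(g-k)$ with $1\le k\le[g/2]$. This already pinpoints where the extremal behaviour should come from: the top odd singularity index.

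The main computational input is \autoref{key lemma of Xiao}. Writing $K_f^2=\sum_i a_is_i$ and $\chi_f=\sum_i b_is_i$ with the $n$-free (second) displayed formulas, I would record the ``local slopes'': $a_2/b_2=\frac{4(g-1)}{g}$, $a_{2k+1}/b_{2k+1}=12-\frac{2g+1}{k(g-k)}$, and $a_{2k}/b_{2k}=12-\frac{8g+4}{k(g-k+1)}$, while $s_{g+2}=0$ (for $g$ even by the minimality of $R^2$ noted after \autoref{lemma for psi}; for $g$ odd because a $(g+2\!\to\! g+2)$-singularity would require an infinitely near point of multiplicity $g+3>g+2$). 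A short check then gives $b_i>0$ for all $i\ge3$ and $\max_i a_i/b_i=\lambda_{\max}$, attained at $i=2[g/2]+1$. Consequently $\lambda_{\max}\chi_f-K_f^2=(\lambda_{\max}b_2-a_2)\,s_2+\sum_{i\ge3}(\lambda_{\max}b_i-a_i)\,s_i$ with each $i\ge3$ summand $\ge0$ and $\lambda_{\max}b_2-a_2>0$, so once we know $s_2\ge0$ the bound $\lambda_f\le\lambda_{\max}$ drops out; and $\lambda_{\max}<12$ is evident.

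The hard part is that $s_2$ need not be nonnegative. To deal with $s_2<0$ I would abandon the bare identities and use the geometry of the branch curve: on the geometrically ruled surface $P$ one has $R\sim-(g+1)K_{P/B}+nF$ with $R^2=4(g+1)n$, hence by adjunction $p_a(R)=(2g+1)n+2(g+1)(b-1)+1$. Bounding $p_a(R)$ from below by $\sum\binom{m}{2}$ over $R$ and its infinitely near points, whose multiplicities are $\le g+2$ by \autoref{lemma for psi}, produces a lower bound for $n$ of the shape $n\ge\frac{1}{2g+1}\big(\sum_k(2k+1)^2s_{2k+1}+\cdots\big)+(\text{lower order})$. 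Feeding this into the first formulas of \autoref{key lemma of Xiao}, written $K_f^2=(2g-2)n-B$ and $\chi_f=\frac g2n-D$ with $B,D\ge0$, the inequality $\lambda_f\le\lambda_{\max}$ becomes $\lambda_{\max}D-B\le\frac12(\lambda_{\max}g-4g+4)\,n$, to be checked coefficient by coefficient — the worst case $k=[g/2]$ reducing, for even $g$, to $(g+1)^2\ge g(g+1)$. I expect the genuine difficulty to be the bookkeeping of the lower-order terms: the contribution of $b=g(B)$, the vertical $(-2)$-curves discarded in passing from $\hat{R}$ to $R'$, and the precise value of $s_2=(K_{\hat{P}/B}+R')\cdot R'$; this careful accounting is what \cite{Xiao92,lt13} carry out.
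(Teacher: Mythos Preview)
The paper does not give its own proof of this corollary; it simply cites \cite{Xiao92,lt13}. The lower bound is indeed immediate from \eqref{eqn-4-2} (all coefficients on the right are nonnegative), and your outline correctly identifies the second displayed formulas in \autoref{key lemma of Xiao} as the right tool for the upper bound. Your computation of the local slopes $a_i/b_i$ and the identification of $\lambda_{\max}=12-\frac{2g+1}{[g/2](g-[g/2])}$ as the maximum over $i\ge3$, $i\ne g+2$, are correct, and so the termwise argument goes through once $s_{g+2}=0$ and $s_2\ge0$.

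There is, however, a genuine gap in the odd-$g$ case. Your justification that ``a $(g+2\to g+2)$-singularity would require an infinitely near point of multiplicity $g+3>g+2$'' does not work: the bound in \autoref{lemma for psi} constrains only the multiplicities of $R$ on the geometrically ruled surface $P$, not those of the intermediate branch divisors $\hat{R}_i$. When one blows up a point of odd multiplicity $g+2$, the exceptional curve is absorbed into $\hat{R}_1$, and a point of multiplicity $g+3$ on $\hat{R}_1$ is perfectly possible (proper transform of multiplicity $g+2$ plus the exceptional curve). The paper itself treats $s_{g+2}$ as potentially nonzero throughout (see \eqref{(1.25g+1)chi_f}, \eqref{eqn-3-44}, and the derivation of \eqref{eqn-3-1}). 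This matters decisively: for odd $g$ the local slope $a_{g+2}/b_{g+2}=\dfrac{4(g-1)(3g+1)}{g^2-2g-1}$ is far larger than $\lambda_{\max}$ (e.g.\ $40$ versus $8.5$ when $g=3$), so the inequality $\lambda_{\max}\chi_f-K_f^2\ge0$ fails termwise and your argument for the case $s_2\ge0$ collapses.

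Your second paragraph, treating $s_2<0$ via a lower bound on $n$, is closer to how Xiao actually argues, and in fact a version of that approach is what is needed to absorb the bad $s_{g+2}$ contribution as well (not only the $s_2<0$ case). But as you yourself note, this is where the real work lies, and the sketch you give --- bounding $p_a(R)$ by $\sum\binom{m}{2}$ and feeding the resulting inequality on $n$ back into the $n$-based formulas --- would need to be carried out with the $s_{g+2}$ term explicitly tracked; writing $K_f^2=(2g-2)n-B$ with $B\ge0$ already fails for odd $g$ since the explicit $+s_{g+2}$ in \autoref{key lemma of Xiao} makes $B$ potentially negative.
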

	
	If moreover the base curve $B$ is an elliptic curve, then the upper bound of $\lambda_f$ can be improved.
	In this case, $g(B)=1$ and hence
	the relative invariants of $f$ equal the corresponding invariants of the surface $S$, i.e.,
	\begin{equation}\label{eqn-2-1}
		K_f^2=K_S^2, \qquad \chi_f=\chi=\chi(\mathcal{O}_S),\qquad e_f=e(S).
	\end{equation}
	By the Miyaoka-Yau inequality together with \eqref{eqn-2-1}, one obtains
	\begin{equation}\label{eqn-4-1}
	\lambda_f=\frac{K_f^2}{\chi_f} \leq 9.
	\end{equation}	
	
%	By the Miyaoka-Yau inequality together with \eqref{eqn-2-1}, one has $e_f\geq 3\chi_f$.
%	Applying \autoref{key lemma of Xiao}, we have
%	$$s_2=e_f+2s_{g+2}-\sum_{k=1}^{[\frac{g}{2}]}s_{2k+1}-\sum_{k=2}^{[\frac{g+1}{2}]}2s_{2k}\geq 3\chi_f+2s_{g+2}-\sum_{k=1}^{[\frac{g}{2}]}s_{2k+1}-\sum_{k=2}^{[\frac{g+1}{2}]}2s_{2k}.$$
%	Hence
%	$$\begin{aligned}
%	\chi_f&\,=\frac{gs_2+(g^2-2g-1)s_{g+2}}{4(2g+1)}+\sum_{k=1}^{[\frac{g}{2}]}\frac{k(g-k)}{2g+1}s_{2k+1}+\sum_{k=2}^{[\frac{g+1}{2}]}\frac{k(g-k+1)}{2(2g+1)}s_{2k}\\
%	&\,\geq  \frac{3g}{4(2g+1)}\chi_f+\frac{g^2-1}{4(2g+1)}s_{g+2}+\sum_{k=1}^{[\frac{g}{2}]}\frac{(4k-1)g-4k^2}{4(2g+1)}s_{2k+1}+
%	\sum_{k=2}^{[\frac{g+1}{2}]}\frac{(k-1)(g-k)}{2(2g+1)}s_{2k}.
%	\end{aligned}$$
%	i.e.,
To end this section, we mention that by
\eqref{eqn-4-2}together with \eqref{eqn-4-1} and \eqref{eqn-slope-hyperelliptic}, one obtains
	\begin{eqnarray}
		\chi_f &\hspace{-2mm}\geq\hspace{-2mm}&
		\frac{g^2-1}{5g+4}s_{g+2}+\sum_{k=1}^{[\frac{g}{2}]}\frac{(4k-1)g-4k^2}{5g+4}s_{2k+1}+
		\sum_{k=2}^{[\frac{g+1}{2}]}\frac{2(k-1)(g-k)}{5g+4}s_{2k}, \quad \text{if~}g(B)=1,\label{(1.25g+1)chi_f}\\
		\chi_f &\hspace{-2mm}>\hspace{-2mm}&
		\frac{g^2-1}{8g+4}s_{g+2}+\sum_{k=1}^{[\frac{g}{2}]}\frac{(4k-1)g-4k^2}{8g+4}s_{2k+1}+
		\sum_{k=2}^{[\frac{g+1}{2}]}\frac{2(k-1)(g-k)}{8g+4}s_{2k}, \quad \text{if~}g(B)\geq 2.\qquad\quad\label{eqn-3-44}
		%=\frac{(\lambda_f-4)g+4}{5g+4}\chi_f.
	\end{eqnarray}

	\section{Quadratic upper bounds on hyperelliptic Albanese fibrations}\label{sec-upper-hyperelliptic}
	The main purpose of this section is to prove \autoref{thm-upper1}.
	We will prove the following upper bounds (using the relative invariants) on the genus $g$ for any not locally trivial hyperelliptic fibration, not necessarily the Albanese fibration.
	We remark once more that the relative invariants of $f$ equal the corresponding invariants of the surface $S$ (cf. \eqref{eqn-2-1}) if the base $B$ is an elliptic curve.
	\begin{theorem}\label{theorem upper bound for g}
		Let $f:\,S \to B$ be a relatively minimal hyperelliptic fibration of genus $g\geq 2$.
		Suppose that $f$ is not locally trivial.
		\begin{enumerate}
			\item If $\lambda_f\leq 4$, then
			$$g\leq \frac{4\chi_f+4}{2+(4\chi_f-K_f^2)} \leq 2\chi_f+2.$$
			
			\item If $\lambda_f>4$, then
			$$g\leq \left\{\begin{aligned}
			&2\chi_f+1, && \text{if $g(B)=0$};\\
			&\frac{25}{4}\chi_f^2+\frac{19}{2}\chi_f+2, &\quad& \text{if $g(B)=1$};\\
			&16\chi_f^2+14\chi_f+2, &&\text{if $g(B)\geq 2$}.
			\end{aligned}\right.$$
		\end{enumerate}\vspace{2mm}
	\end{theorem}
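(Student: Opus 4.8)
We sketch the strategy; the discussion is organized around the value of the slope $\lambda_f$, and the only genuinely hard case is $\lambda_f>4$ with $g(B)=1$.

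The plan for part (1) is to feed Xiao's identity \eqref{eqn-4-2} back into the slope inequality \eqref{eqn-slope}. Multiplying \eqref{eqn-4-2} by $g$ puts it in the form
\[
g\,(4\chi_f-K_f^2)+(g^2-1)s_{g+2}+\sum_k\big(4k(g-k)-g\big)s_{2k+1}+\sum_k\big(2k(g-k+1)-2g\big)s_{2k}=4\chi_f,
\]
and an elementary check shows that every bracketed coefficient is at least $2g-4$ (each is concave in $k$ with endpoint values $3g-4$, $2g-4$, or $g^2-1$). Hence, provided $\lambda_f\neq\frac{4(g-1)}{g}$, some $s_i$ with $i\ge 3$ is positive and we obtain $g\big(2+(4\chi_f-K_f^2)\big)\le 4\chi_f+4$; in the remaining case $\lambda_f=\frac{4(g-1)}{g}$ and \autoref{key lemma of Xiao} reduces to $\chi_f=\tfrac12 gn\ge\tfrac g2$. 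Either way $g\le 2\chi_f+2$. For the case $g(B)=0$ of part (2) I would argue directly from $\chi_f=\chi(\mathcal{O}_S)+(g-1)$: a bound on the irregularity of a surface carrying a hyperelliptic pencil (and, in the sub-case in which the minimal model of $S$ is ruled, a Riemann--Hurwitz bound on the genus of the base of that ruling, obtained from a general fibre) gives $g\le 2\chi_f+1$. This case does not arise for Albanese fibrations, where $g(B)=q(S)\ge 1$.

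For $\lambda_f>4$ and $g(B)=1$, the plan uses two outputs of \autoref{key lemma of Xiao}. First, since $n=\tfrac{R^2}{4(g+1)}$ is a positive integer,
\[
g\ \le\ gn\ =\ 2\chi_f+2\sum_k k^2 s_{2k+1}+\sum_k k(k-1)s_{2k},
\]
so it suffices to bound the index sum on the right by a quadratic in $\chi_f$. Second, the lower bound \eqref{(1.25g+1)chi_f} controls the $s_i$. I would then run a dichotomy around a threshold $k_0$ of size $\approx\tfrac{5\chi_f+1}{4}$. If some $s_{2k+1}$ (resp. $s_{2k}$) with $k\ge k_0$ is non-zero, then — using that $(4k-1)g-4k^2$ (resp. $2(k-1)(g-k)$) is increasing for $k\le g/2$ (resp. $k\le\frac{g+1}{2}$) — \eqref{(1.25g+1)chi_f} forces $(5g+4)\chi_f$ to be at least the value of that coefficient at $k=k_0$, and solving this inequality for $g$ gives $g\le\tfrac{25}{4}\chi_f^2+O(\chi_f)$ directly. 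If instead all such high indices vanish, then in the displayed equality every surviving $k$ is $<k_0$, so for each of them the ratio of its coefficient there to its coefficient in \eqref{(1.25g+1)chi_f} is at most $\approx\tfrac{5\chi_f}{4g}$; summing, $2\sum k^2 s_{2k+1}+\sum k(k-1)s_{2k}\le\tfrac{5\chi_f}{4g}\,(5g+4)\chi_f$, again a quadratic, whence $g\le 2\chi_f+\tfrac{25}{4}\chi_f^2+O(\chi_f)$. Tracking the error terms exactly rather than asymptotically yields the stated $\tfrac{25}{4}\chi_f^2+\tfrac{19}{2}\chi_f+2$. The case $g(B)\ge 2$ is identical with \eqref{(1.25g+1)chi_f} replaced by \eqref{eqn-3-44}: the factor $5g+4$ becomes $8g+4$, which turns $\tfrac{25}{4}$ into $\big(\tfrac{8}{5}\big)^2\cdot\tfrac{25}{4}=16$ and rescales the lower-order terms.

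The main obstacle is exactly this quadratic bookkeeping. The naive move — bound each $s_i$ individually by a multiple of $\chi_f$ and substitute into $g\le gn$ — overshoots (the resulting bound is cubic, or even vacuous) because the weight $k^2$ appearing in $gn$ and the weight $(4k-1)g-4k^2$ appearing in \eqref{(1.25g+1)chi_f} grow at incomparable rates in $k$. One really has to use \eqref{(1.25g+1)chi_f} twice — once to cap how large the indices can be, once to cap the range of $k$ on which they are supported — and to calibrate the threshold $k_0$ so that the two branches of the dichotomy meet at the same quadratic; optimizing the constants along the way is what pins down $\tfrac{25}{4},\tfrac{19}{2},2$. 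A secondary point is the treatment of the degenerate loci ($\lambda_f=\frac{4(g-1)}{g}$ in part (1), $S$ ruled in the $g(B)=0$ case), where \eqref{eqn-4-2} carries no information and one falls back on \autoref{key lemma of Xiao} and Riemann--Hurwitz.
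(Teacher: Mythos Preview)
Your treatment of part (1) is essentially the paper's: both extract the inequality $\chi_f-\tfrac{g}{4}(4\chi_f-K_f^2)\ge\tfrac{g-2}{2}$ from \eqref{eqn-4-2} by checking that every coefficient is at least $2g-4$. For $g(B)=0$ the paper simply invokes Xiao's bound $q\le\tfrac{g+1}{2}$ on the irregularity, so $\chi_f=p_g-q+g\ge\tfrac{g-1}{2}$; your detour through Riemann--Hurwitz on a possible ruled minimal model is not needed.

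For $\lambda_f>4$ and $g(B)\ge 1$ your approach is genuinely different from the paper's, but as written it has a gap. The paper does \emph{not} run a dichotomy on the size of the indices; instead it proves a sharp auxiliary inequality (\autoref{thm-sharp-upper-bound}) by squaring the identity
\[
(\lambda_f-4)\chi_f+2n=(2g+2)s_{g+2}+\textstyle\sum_k(4k-1)s_{2k+1}+\sum_k 2(k-1)s_{2k}
\]
and combining the result with the integrality identity \eqref{eqn-7-6}. This yields the closed form $g\le\tfrac{(\lambda_f-4)^2}{4n}\chi_f^2+(\lambda_f-4+\tfrac{\lambda_f}{2n})\chi_f+n+1$, and then an easy optimisation over $n$ (using $\lambda_f\le 9$, resp.\ $\lambda_f<12$) gives the stated constants exactly.

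The concrete problem with your dichotomy is the threshold. With $k_0\approx\tfrac{5\chi_f+1}{4}$ the first branch is vacuous: for odd indices, $(4k_0-1)g-4k_0^2\le(5g+4)\chi_f$ reads $5\chi_f\,g-4k_0^2\le 5\chi_f\,g+4\chi_f$, which gives no upper bound on $g$; for even indices the $g$--coefficient on the right is $2(k_0-1)<5\chi_f$, again yielding nothing. To make the first branch bite you need $4k_0-1>5\chi_f$ (odd) and, more restrictively, $2(k_0-1)>5\chi_f$ (even), i.e.\ $k_0>\tfrac{5\chi_f}{2}+1$. With that corrected threshold your scheme does produce the correct leading coefficient $\tfrac{25}{4}$ (resp.\ $16$), but the assertion that ``tracking the error terms exactly'' recovers $\tfrac{19}{2}\chi_f+2$ is not substantiated: the second branch becomes an implicit inequality in $g$ whose solution does not obviously match the paper's lower-order terms. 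The paper's squaring trick is what pins those down, and it also gives the equality characterisation (exactly one $s_{2k}=1$), which your method would not see.
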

	We first prove \autoref{thm-upper1} based on \autoref{theorem upper bound for g}.
	\begin{proof}[Proof of \autoref{thm-upper1}]
		Let $f:\,S \to B$ be the Albanese fibration of $S$ and let $g$ be the genus of a general fiber of $f$.
		Since $g(B)=q(S)=1$,   $f$ must be not locally trivial; otherwise $\chi(\mathcal{O}_S)=\chi_f=0$ which is absurd.
		Thus by \eqref{eqn-2-1} together with \autoref{theorem upper bound for g}\,(2),
		one obtains \eqref{eqn-upper-1}.
	\end{proof}

%		{\color{red} If $g(B)\geq 2$, by the non-negativity of the Hodge bundle, one obtains
%		$$\chi(\mathcal{O}_S)-(g-1)(g(B)-1)=\chi_f\geq 0.$$
%		Hence $g\leq \frac{\chi(\mathcal{O}_S)}{g(B)-1}+1\leq \chi({\mathcal{O}_S})+1$. delete this or not? keep the same with \autoref{thm-upper1}.}

	\begin{remark}
		(1) If $\lambda_f=4$ and $g(B)=1$, then the upper bound $g\leq 2\chi_f+2$ has already been obtained by Ishida \cite{Ish05}.
		
%(2) From the proof above, one can even obtain a quadratic upper bound in \autoref{theorem upper bound for g}\,(2) such that the quadratic coefficient relies on the slope $\lambda_f$.

	(2)	
%	The quadratic upper bounds given in \autoref{theorem upper bound for g}\,(2) seem not to be sharp.
%		Nevertheless,
		We will construct examples in \autoref{sec-example} showing that the quadratic upper bounds can not be improved into a linear one.
		
%(3) From the proof above, one can even obtain a quadratic upper bound in \autoref{theorem upper bound for g}\,(2) such that the quadratic coefficient relies on the slope $\lambda_f$ as follows,
%	which is in fact sharp; see \autoref{rem-5-3} for an example reaching the equality in \eqref{eqn-6-1}.
	\end{remark}

The proof of \autoref{theorem upper bound for g} relies on the following technical result.
\begin{theorem}\label{thm-sharp-upper-bound}
	Let $f:\,S \to B$ be a relatively minimal hyperelliptic fibration of genus $g\geq 2$, and $n=\frac{R^2}{4(g+1)}$ be an integer as in \eqref{eqn-def-n}.
	Suppose that $f$ is not locally trivial with slope $\lambda_f>4$ and $g(B)\geq 1$.
	Then
\begin{equation}\label{eqn-maximal-n}
 1\leq  n\leq \Big(\frac{\lambda_f-4}{2}+\frac{\lambda_f}{g-1}\Big)\chi_f,
\end{equation}
and
	\begin{equation}\label{eqn-6-1}
	g\leq \frac{(\lambda_f-4)^2}{4n}\chi_f^2+(\lambda_f-4+\frac{\lambda_f}{2n})\chi_f+n+1.
	\end{equation}	
	Moreover, equality in \eqref{eqn-6-1} holds if and only if  there exists exactly one $s_{2k}=1$ for some $k\geq 2$ and $s_i=0$ for all $i\geq 3, i\neq 2k$.	
\end{theorem}
We first prove \autoref{theorem upper bound for g} based on \autoref{thm-sharp-upper-bound} in the following, and then prove \autoref{thm-sharp-upper-bound}.
\begin{proof}[Proof of \autoref{theorem upper bound for g}]
%		The first statement follows from the slope inequality \eqref{eqn-slope}.
%        The second one was proved by Ishida \cite{Ish05}.
%        Here we can give a short proof based on \autoref{key lemma of Xiao}.
        (1).  Note that if $s_i=0$ for all $i\geq 3$,  then by \autoref{key lemma of Xiao} we get $\chi_f=\frac{1}{2}gn$ and consequently $g=\frac{2\chi_f}{n}\leq 2\chi_f<2\chi_f+2$. So we can assume that $s_i\neq 0$ for some $i\geq 3$.

         Since $\lambda_f\leq 4$, i.e., $K_f^2\leq 4\chi_f$,
        it follows that $t:=4\chi_f-K_f^2\geq 0$.
        By \autoref{key lemma of Xiao} one gets
        $$\begin{aligned}
        \chi_f-\frac{gt}{4}&\,=\frac{g}{4}\left(K_f^2-\frac{4(g-1)}{g}\chi_f\right)\\
        &\,=\frac{g^2-1}{4}s_{g+2}+\sum_{k=1}^{[\frac{g}{2}]}\frac{(4k-1)g-4k^2}{4}s_{2k+1}+
        \sum_{k=2}^{[\frac{g+1}{2}]}\frac{(k-1)(g-k)}{2}s_{2k}\\
        &\,\geq \min\left\{\frac{g^2-1}{4},~ \min_{1\leq k \leq [\frac{g}{2}]} \frac{(4k-1)g-4k^2}{4},~\min_{2\leq k \leq [\frac{g+1}{2}]} \frac{(k-1)(g-k)}{2}\right\}\\
        &\,= \frac{g-2}{2}.
        \end{aligned}$$
        Thus $g\leq \frac{4\chi_f+4}{2+t} \leq 2\chi_f+2$ as required.
        %Moreover, if the equality holds, then $s_4=1$ and   $s_i=0$ for other $i\geq 3$.

		\vspace{2mm}
		(2).
		If $g(B)=0$, then
		$$\chi_f=\chi(\mathcal{O}_S)+(g-1)=p_g-q+g \geq \frac{g-1}{2}.$$
		The inequality above follows from the inequality $q\leq \frac{g+1}{2}$ by Xiao \cite{xiao87-2} and the non-negativity of $p_g$.
		Hence $g\leq 2\chi_f+1$ as required.
		
		\vspace{2mm}
		We consider next the case when $g(B)\geq 1$.
 By \autoref{thm-sharp-upper-bound} above, we have
\begin{equation}\label{eqn-3-11}
 g\leq \frac{(\lambda_f-4)^2}{4n}\chi_f^2+(\lambda_f-4+\frac{\lambda_f}{2n})\chi_f+n+1, \qquad
 n\leq \big( \frac{\lambda_f-4}{2} +\frac{\lambda_f}{g-1}\big) \chi_f.
\end{equation}
If $g(B)=1$, then $\lambda_f\leq 9$ by \eqref{eqn-4-1}, and hence
$$g\leq \max_{1 \leq n\leq \big( \frac{\lambda_f -4}{2} +\frac{\lambda_f}{g-1}\big) \chi_f} \left\{g(n):= \frac{25}{4n}\chi_f^2+\Big(5+\frac{9}{2n}\Big)\chi_f+n+1 \right\}.$$
Suppose that
\begin{equation}\label{eqn-suppose-2}
g>\frac{25}{4}\chi_f^2+\frac{19}{2}\chi_f+2=g(1).
\end{equation}
Since $\chi_f\geq 1$, we have $g\geq 18$ and thus $n\leq 3\chi_f$.
Note that as $n$ increases, $g(n)$ first decreases and then increases. Hence we have
$$g\leq \max_{1 \leq n\leq \big( \frac{\lambda_f -4}{2} +\frac{\lambda_f}{g-1}\big) \chi_f} \{g(n) \}\leq \max\{g(1), g(3\chi_f)\}=\max\{g(1), \frac{121}{12}\chi_f+\frac{5}{2}\}=g(1),$$
which  contradicts to \eqref{eqn-suppose-2}.

If $g(B)=2$, the argument is almost the same as the case when $g(B)=1$, except that we have to replace
\eqref{eqn-4-1} by \eqref{eqn-slope-hyperelliptic}; the details are omitted here.
%, we have $\lambda_f<12$. Hence we get
%$$g< \max_n\{g(n):= \frac{16}{n}\chi_f^2+(8+\frac{6}{n})\chi_f+n+1 \}.$$
%Here we prove \autoref{theorem upper bound for g} by contradiction.
%Suppose that
%\begin{equation}\label{eqn-suppose-1}
%g> 16\chi_f^2+14\chi_f+2=g(1).
%\end{equation}
%Since $\chi_f\geq 1$, we have $g\geq 33$ and thus $n<5\chi_f$.
%Note that as $n$ increases, $g(n)$ first decreases and then increases. By \eqref{eqn-3-11}, we have
%$$g<\max_n\{g(n) \}\leq \max_n\{g(1), g(5\chi_f)\}=\max_n\{g(1), \frac{81}{5}\chi_f+\frac{11}{5}\}=g(1),$$
%which  contradicts to \eqref{eqn-suppose-1}.
\end{proof}

We now prove the technical result \autoref{thm-sharp-upper-bound}.
\begin{proof}[{Proof of \autoref{thm-sharp-upper-bound}}]
	We first prove \eqref{eqn-6-1}. By \autoref{key lemma of Xiao}, we have
%	
%	\begin{align}
%	\chi_f&\,=\frac{1}{2}gn-\sum_{k=1}^{[\frac{g+1}{2}]}k^2s_{2k+1}-\sum_{k=2}^{[\frac{g+1}{2}]}\frac{k(k-1)}{2}s_{2k}\nonumber\\ &\,=\frac{g(s_2-2s_{g+2})}{4(2g+1)}+\sum_{k=1}^{[\frac{g+1}{2}]}\frac{k(g-k)}{2g+1}s_{2k+1}+
%	\sum_{k=2}^{[\frac{g+1}{2}]}\frac{k(g-k+1)}{2(2g+1)}s_{2k};\nonumber\\[4pt]
%	K_f^2&\,=(2g-2)n+s_{g+2}-\sum_{k=1}^{[\frac{g+1}{2}]}(2k-1)^2s_{2k+1}-\sum_{k=2}^{[\frac{g+1}{2}]}2(k-1)^2s_{2k}\nonumber\\
%	&\,=\frac{(g-1) s_2+3s_{g+2}}{2g+1}+\sum_{k=1}^{[\frac{g+1}{2}]}\frac{12k(g-k)-2g-1}{2g+1}s_{2k+1}
%	+\sum_{k=2}^{[\frac{g+1}{2}]}\frac{6k(g-k+1)-4g-2}{2g+1}s_{2k};\nonumber\\[4pt]
%	e_f&\,=s_2-3s_{g+2}+\sum_{k=1}^{[\frac{g+1}{2}]}s_{2k+1}+2\sum_{k=2}^{[\frac{g+1}{2}]}s_{2k}.\label{eqn-7-1}
%	\end{align}
%	Hence we get   	
	\begin{equation}\label{eqn-7-2}
	(\lambda_f-4)\chi_f+2n=K_f^2-4\chi_f+2n=(2g+2)s_{g+2}+\sum_{k=1}^{[\frac{g}{2}]}(4k-1) s_{2k+1}+\sum_{k=2}^{[\frac{g+1}{2}]}2(k-1)s_{2k}.
	\end{equation}
	Hence
	\begin{eqnarray}
	&&(\lambda_f-4)^2 \chi_f^2+(4n+3)(\lambda_f-4)\chi_f+4n^2+6n\nonumber\\
	&=& \big((\lambda_f-4)\chi_f+2n\big)^2+3\big((\lambda_f-4)\chi_f+2n\big)\nonumber\\
	&=& \Big((2g+2)s_{g+2}+\sum_{k=1}^{[\frac{g}{2}]}(4k-1) s_{2k+1}+\sum_{k=2}^{[\frac{g+1}{2}]}2(k-1)s_{2k}\Big)^2\nonumber\\
	 &&+3\Big((2g+2)s_{g+2}+\sum_{k=1}^{[\frac{g}{2}]}(4k-1) s_{2k+1} +\sum_{k=2}^{[\frac{g+1}{2}]}2(k-1)s_{2k}\Big)\nonumber\\
	&\geq&(g+1)(4g+10)s_{g+2}+\sum_{k=1}^{[\frac{g}{2}]}(16k^2+4k-2) s_{2k+1}+\sum_{k=2}^{[\frac{g+1}{2}]} (4k^2-2k-2) s_{2k}\nonumber\\
	&\geq&(g+1)(2g+4)s_{g+2}+\sum_{k=1}^{[\frac{g}{2}]}(8k^2+4k-1) s_{2k+1}+\sum_{k=2}^{[\frac{g+1}{2}]} (4k^2-2k-2) s_{2k}. \label{eqn-7-3}
	\end{eqnarray}

	On the other hand, by \autoref{key lemma of Xiao}, we have
	\begin{align}
	\chi_f&\,=\frac{gs_2+(g^2-2g-1)s_{g+2}}{4(2g+1)}+\sum_{k=1}^{[\frac{g}{2}]}\frac{k(g-k)}{2g+1}s_{2k+1}+
	\sum_{k=2}^{[\frac{g+1}{2}]}\frac{k(g-k+1)}{2(2g+1)}s_{2k}\nonumber\\
	&\,=\frac{g}{4(2g+1)}\Big(e_f-\sum_{k=1}^{[\frac{g}{2}]} s_{2k+1}-\sum_{k=2}^{[\frac{g+1}{2}]}2s_{2k}\Big)+\frac{(g^2-1)s_{g+2}}{4(2g+1)}
+\sum_{k=1}^{[\frac{g}{2}]}\frac{k(g-k)}{2g+1}s_{2k+1}+
	\sum_{k=2}^{[\frac{g+1}{2}]}\frac{k(g-k+1)}{2(2g+1)}s_{2k}\nonumber\\ &\,=\frac{1}{8}\Big(e_f+(2g+2)s_{g+2}\Big)+\sum_{k=1}^{[\frac{g}{2}]}\big(\frac{k}{2}-\frac{1}{8}\big)s_{2k+1}
+\sum_{k=2}^{[\frac{g+1}{2}]}\big(\frac{k}{4}-\frac{1}{4}\big)s_{2k}\nonumber\\
	&\quad	-\frac{1}{2g+1}\left(\frac{1}{8}\Big(e_f+(g+1)(2g+4)s_{g+2}\Big)+\sum_{k=1}^{[\frac{g}{2}]}\big(\frac{2k^2+k}{2}-\frac{1}{8}\big)s_{2k+1}+
	\sum_{k=2}^{[\frac{g+1}{2}]}\big(\frac{2k^2-k}{4}-\frac{1}{4}\big)s_{2k}\right)
	\nonumber
	\end{align}	
	Combining this with \eqref{eqn-7-2}, we get
	\begin{align}
	2n &\,=e_f+(2g+2)s_{g+2}+\sum_{k=1}^{[\frac{g}{2}]}(4k-1) s_{2k+1}+\sum_{k=2}^{[\frac{g+1}{2}]}2(k-1)s_{2k}-8\chi_f\nonumber\\
	&\,=\frac{1}{2g+1}\bigg(e_f+(g+1)(2g+4)s_{g+2}+\sum_{k=1}^{[\frac{g}{2}]}(8k^2+4k-1)s_{2k+1}+\sum_{k=2}^{[\frac{g+1}{2}]}(4k^2-2k-2)s_{2k}\bigg). \label{eqn-7-5}
	\end{align}
	Note that the left hand side of the above equality is an integer and that the right hand side of the above equality is positive.
Hence we have
	\begin{equation}\label{eqn-7-6}
	e_f+(g+1)(2g+4)s_{g+2}+\sum_{k=1}^{[\frac{g}{2}]}(8k^2+4k-1)s_{2k+1}+\sum_{k=2}^{[\frac{g+1}{2}]}(4k^2-2k-2)s_{2k}=2n(2g+1).
	\end{equation}
	Now combining \eqref{eqn-7-3} and \eqref{eqn-7-6}, we get	
	\begin{align}
	&\,(\lambda_f-4)^2 \chi_f^2+\big(4n(\lambda_f-4)+2\lambda_f\big)\chi_f+4n^2+6n\nonumber\\
	&\,=e_f+(\lambda_f-4)^2 \chi_f^2+(4n+3)(\lambda_f-4)\chi_f+4n^2+6n \nonumber\\
	&\,\geq e_f+(g+1)(2g+4)s_{g+2}+\sum_{k=1}^{[\frac{g+1}{2}]}(8k^2+4k-1)s_{2k+1}+\sum_{k=2}^{[\frac{g+1}{2}]}(4k^2-2k-2)s_{2k}\nonumber\\
	&\,=2n(2g+1). \label{eqn-7-7}
	\end{align}
	Hence we get
	$$g\leq \frac{(\lambda_f-4)^2}{4n}\chi_f^2+\big(\lambda_f-4+\frac{\lambda_f}{2n}\big)\chi_f+n+1.$$
	
	Finally,  the equality in \eqref{eqn-6-1} holds if and only if equality in  \eqref{eqn-7-3} holds,
	and if and only if there exists exactly one $s_{2k}=1$ for some $k\geq 2$ and $s_i=0$ for all $i\geq 3, i\neq 2k$.	

\vspace{1ex}
Now we prove \eqref{eqn-maximal-n}.
According to \eqref{eqn-4-2} and \eqref{eqn-7-5}, we get
  \begin{equation*}
	\begin{aligned}
	\frac{(\lambda_f-4)g+4}{g}\chi_f
    & \geq \frac{g-1}{2g}\left( (2g+2)s_{g+2}+\sum_{k=1}^{[\frac{g}{2}]}(4k-1) s_{2k+1}+\sum_{k=2}^{[\frac{g+1}{2}]}2(k-1)s_{2k} \right) \\
	&\geq \frac{g-1}{2g}\big( (\lambda_f-4)\chi_f+2n \big).
	\end{aligned}
	\end{equation*}
This proves \eqref{eqn-maximal-n}.
\end{proof}

\begin{remark}
We will construct  examples reaching the equality  in \eqref{eqn-6-1}; see \autoref{thm-example-2}.
\end{remark}

%The following lemma is used in the proof of \autoref{theorem upper bound for g}.
%\begin{lemma}\label{lemma-maximal-n}
%Let $f:\,S \to B$ be a relatively minimal hyperelliptic fibration of genus $g\geq 2$, and $n=\frac{R^2}{4(g+1)}$ be an integer as in \eqref{eqn-def-n}.
%	Suppose that $f$ is not locally trivial with slope $\lambda_f>4$ and $g(B)\geq 1$. Then
%\begin{equation}\label{eqn-maximal-n}
%  n\leq \Big(\frac{\lambda_f-4}{2}+\frac{\lambda_f}{g-1}\Big)\chi_f.
%\end{equation}
% \end{lemma}
%\begin{proof}
%According to \eqref{eqn-4-2} and \eqref{eqn-7-5}, we get
%  \begin{equation*}
%	\begin{aligned}
%	\frac{(\lambda_f-4)g+4}{g}\chi_f
 %   & \geq \frac{g-1}{2g}\left( (2g+2)s_{g+2}+\sum_{k=1}^{[\frac{g}{2}]}(4k-1) s_{2k+1}+\sum_{k=2}^{[\frac{g+1}{2}]}2(k-1)s_{2k} \right) \\
%	&\geq \frac{g-1}{2g}\big( (\lambda_f-4)\chi_f+2n \big).
%	\end{aligned}
%	\end{equation*}
%This proves \eqref{eqn-maximal-n}.
%\end{proof}

If $\chi_f$ is large, one can obtain a better upper bound on $g$ as follows.
\begin{lemma}\label{lemma-maximal-g(n)}
Let $f:\,S \to B$ be a relatively minimal hyperelliptic fibration of genus $g\geq 2$, and $n=\frac{R^2}{4(g+1)}$ be an integer as in \eqref{eqn-def-n}.
	Suppose that $f$ is not locally trivial with slope $\lambda_f>4$ and $g(B)\geq 1$.
 Set
  $g(n):=\frac{(\lambda_f-4)^2}{4n}\chi_f^2+\big(\lambda_f-4+\frac{\lambda_f}{2n}\big)\chi_f+n+1$.
 If  $g\geq 25$ and  $\chi_f\geq 4$, then we have
  $$g(1)\geq g(2)\geq \max_{1 \leq n\leq \big( \frac{\lambda_f -4}{2} +\frac{\lambda_f}{g-1}\big) \chi_f} \{g(n)\}.$$
 \end{lemma}

 \begin{proof}
 (1) Since $\chi_f\geq 1$ and $\lambda_f\geq 4$, we have $\frac{\lambda_f}{2}\chi_f+1\geq \frac{\lambda_f}{4}\chi_f+2$ and thus $g(1)\geq g(2)$. So we only need to prove
  $$g(2)\geq \max\limits_{1 \leq n\leq \big( \frac{\lambda_f -4}{2} +\frac{\lambda_f}{g-1}\big) \chi_f} \{g(n)\}.$$

  Since $n\leq \big(\frac{\lambda_f-4}{2}+\frac{\lambda_f}{g-1}\big)\chi_f$, $\lambda_f<12$ (see \eqref{eqn-slope-hyperelliptic}) and   we have assumed $g\geq 25$, we get  $n<\frac{\lambda_f-3}{2}\chi_f$.  Note that as $n$ increases, $g(n)$ first decreases and then increases.
Hence  we only need to show $g(2)\geq g\big(\frac{\lambda_f-3}{2}\chi_f\big)$.

$$g(2)-g\big(\frac{\lambda_f-3}{2}\chi_f\big)
=\Big( \frac{\chi_f}{8}-\frac{1}{2(\lambda_f-3)}\Big)(\lambda_f-4)^2\chi_f
+\frac{6-\lambda_f}{4}\chi_f+2-\frac{\lambda_f}{\lambda_f-3}.$$

Since  we have assumed $\lambda_f\geq 4$ and  $\chi_f\geq 4$, we have $\frac{\chi_f}{8}\geq \frac{1}{2}\geq \frac{1}{2(\lambda_f-3)}$.

 (i) If $\lambda_f\leq 6$, then we have
 $$g(2)-g\big(\frac{\lambda_f-3}{2}\chi_f\big)\geq 6-\lambda_f+2-\frac{\lambda_f}{\lambda_f-3}
 =4-\left( (\lambda_f-3)+\frac{3}{\lambda_f-3}\right)\geq 0.$$

  (ii) If $\lambda_f>6$, then we have $\lambda_f-3\geq 3$ and $2-\frac{\lambda_f}{\lambda_f-3}\geq 0$. Hence we have
  $$g(2)-g\big(\frac{\lambda_f-3}{2}\chi_f\big)\geq
  \left( \big(\frac{1}{2}-\frac{1}{6}\big)(\lambda_f-4)^2+\frac{1}{2}-\frac{\lambda_f-4}{4}\right)\chi_f>0.$$

  Therefore if $g\geq 25$ and $\chi_f\geq 4$,  we always have
  $g(2)\geq g\big(\frac{\lambda_f-3}{2}\chi_f\big)$.
 \end{proof}

\begin{proposition}\label{pro-4-1}
 	Let $f:\,S \to B$ be a relatively minimal hyperelliptic fibration of genus $g\geq 2$.
	Suppose that $f$ is not locally trivial with slope $\lambda_f\geq 4$ and $g(B)\geq 1$.
	   If $g\geq 25$ and $\chi_f\geq 4$, then we have
\begin{equation}\label{eqn-6-12}
g\leq \left\{\begin{aligned}
			&\frac{(\lambda_f-4)^2}{4}\chi_f^2+(\frac{3}{2}\lambda_f-4)\chi_f+2, &\quad& \text{if $g$ is even};\\
			&\frac{(\lambda_f-4)^2}{8}\chi_f^2+(\frac{5}{4}\lambda_f-4)\chi_f+3, &&\text{if $g$ is odd}.
			\end{aligned}\right.
\end{equation}
	Moreover, equality in \eqref{eqn-6-12} holds if and only if  there exists exactly one $s_{2k}=1$ for some $k\geq 2$ and $s_i=0$ for all $i\geq 3, i\neq 2k$.	

\end{proposition}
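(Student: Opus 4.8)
The plan is to sharpen the estimate of \autoref{thm-sharp-upper-bound} by locating the integer $n$ precisely, combining a parity constraint with \autoref{lemma-maximal-n}. Suppose first that $\lambda_f>4$. By \autoref{thm-sharp-upper-bound} we have $g\le\Phi(n)$, where
\[
\Phi(x):=\frac{a}{x}+x+(\lambda_f-4)\chi_f+1,\qquad a:=\frac{(\lambda_f-4)^2}{4}\chi_f^2+\frac{\lambda_f}{2}\chi_f ;
\]
a direct computation shows that $\Phi(1)$ and $\Phi(2)$ are exactly the two right-hand sides appearing in \eqref{eqn-6-12}. From the identities $\Phi(x)-\Phi(1)=\frac{(x-1)(x-a)}{x}$ and $\Phi(x)-\Phi(2)=\frac{(x-2)(x-a/2)}{x}$ one reads off that $\Phi(x)\le\Phi(1)$ for $1\le x\le a$ and $\Phi(x)\le\Phi(2)$ for $2\le x\le a/2$. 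Hence it suffices to prove that $1\le n\le a$ when $g$ is even, and that $2\le n\le a/2$ when $g$ is odd.

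The crucial point is the parity of $n$. By the first formula in \autoref{key lemma of Xiao}, $\chi_f=\tfrac12 gn-\sum_k k^2 s_{2k+1}-\sum_k\tfrac{k(k-1)}{2}s_{2k}$, and both sums are integers, so $\tfrac12 gn\in\Zbb$; therefore $n$ is even whenever $g$ is odd, and in particular $n\ge2$ in that case (while $n\ge1$ always, since $f$ is locally non-trivial). For the upper estimate, \autoref{lemma-maximal-n} gives $n\le\big(\tfrac{\lambda_f-4}{2}+\tfrac{\lambda_f}{g-1}\big)\chi_f$, and a routine one-variable estimate using $g\ge25$, $\chi_f\ge4$ and $4<\lambda_f<12$ (the last by \eqref{eqn-slope-hyperelliptic}) shows that this quantity is $\le a$ in all cases and $\le a/2$ when $g$ is odd. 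Combining the two sides, $g\le\Phi(n)\le\Phi(1)$ when $g$ is even and $g\le\Phi(n)\le\Phi(2)$ when $g$ is odd, which is \eqref{eqn-6-12}; equality there forces equality in \eqref{eqn-6-1}, so the equality clause is inherited from \autoref{thm-sharp-upper-bound}.

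It remains to treat the boundary case $\lambda_f=4$, to which \autoref{thm-sharp-upper-bound} does not apply. If $g$ is even the asserted bound is $g\le2\chi_f+2$, which is \autoref{theorem upper bound for g}\,(1). If $g$ is odd, specializing \eqref{eqn-4-2} to $\lambda_f=4$ writes $4\chi_f$ as a nonnegative integer combination of the indices $s_i$ with $i\ge3$, in which the coefficient of each $s_i$ with $i\ge5$, and of $s_{g+2}$, is at least $4g-12$, whereas those of $s_3$ and $s_4$ equal $3g-4$ and $2g-4$. A short case analysis then yields $g\le\chi_f+3$, the claimed bound: the configurations in which only $s_3$, respectively only $s_4$, is nonzero would force $4\mid 3g-4$, respectively $g$ even, and are thus vacuous for $g$ odd, while the all-zero configuration is impossible since $\chi_f>0$; in every remaining configuration $4\chi_f\ge4g-12$ directly.

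The step I expect to be decisive is the parity observation for $n$: it is exactly what makes $\Phi(2)$ rather than $\Phi(1)$ available when $g$ is odd, and hence what halves the leading quadratic coefficient. The only genuinely separate argument is the degenerate case $\lambda_f=4$ with $g$ odd, where \eqref{eqn-4-2} collapses to a short exact identity and the bound must be extracted by hand through a parity-driven vacuity check instead of via \autoref{thm-sharp-upper-bound}.
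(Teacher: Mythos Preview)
Your argument follows the paper's strategy: bound $g$ by $g(n)=\Phi(n)$ via \autoref{thm-sharp-upper-bound}, then trap $n$ between a parity-driven lower bound and the upper bound of \autoref{lemma-maximal-n}, and finally use the convexity of $\Phi$ (your factorization $\Phi(x)-\Phi(n_0)=\frac{(x-n_0)(x-a/n_0)}{x}$ replaces the paper's \autoref{lemma-maximal-g(n)}). The one substantive difference is the parity step: the paper argues that $R\sim -(g+1)K_{P/B}+n\Gamma$ is an even class in $\mathrm{Num}(P)$, so for $g$ odd the term $n\Gamma$ must itself be even, forcing $2\mid n$; you instead read off $\tfrac12 gn\in\Zbb$ directly from the first formula for $\chi_f$ in \autoref{key lemma of Xiao}. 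Your route is more elementary and avoids any discussion of $\mathrm{Num}(P)$, while the paper's argument is more geometric; both are valid.

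One small correction in your boundary case $\lambda_f=4$, $g$ odd: the configurations ``only $s_3\neq0$'' and ``only $s_4\neq0$'' are \emph{not} vacuous. What actually happens is that $3g-4$ (resp.\ $g-2$) is odd when $g$ is odd, so $4\chi_f=(3g-4)s_3$ forces $4\mid s_3$, hence $s_3\ge4$ and $4\chi_f\ge4(3g-4)>4g-12$; similarly $2\mid s_4$, hence $s_4\ge2$ and $4\chi_f\ge2(2g-4)>4g-12$. Your divisibility claims ``$4\mid 3g-4$'' and ``$g$ even'' are only what one gets under the tacit extra assumption $s_3=1$ (resp.\ $s_4=1$). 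With this adjustment the case analysis goes through. Incidentally, the paper's own proof invokes \autoref{thm-sharp-upper-bound} (which requires $\lambda_f>4$) without separately discussing $\lambda_f=4$, so your treating this boundary explicitly is a useful addition.
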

%\begin{equation}\label{eqn-6-11}
%	g\leq \frac{(\lambda_f-4)^2}{4n}\chi_f^2+(\lambda_f-4+\frac{\lambda_f}{2n})\chi_f+n+1.
%	\end{equation}	
\begin{proof}
  By \autoref{thm-sharp-upper-bound} and \autoref{lemma-maximal-g(n)}, we only need to prove $n\geq 2$ if $g$ is odd.
  Here we use notations as in \autoref{sec-hyperelliptic}. Recall that
  $R\sim -(g+1)K_{P/B}+n\Gamma$ is an even divisor. If $g$ is odd, then $-(g+1)K_{P/B}$ is an even divisor and thus $n\Gamma$ is also an even divisor. Since the N\'eron-Severi group  $\text{Num}(P)$ is generated by a section and a fibre $\Gamma$ of $h:\, P\rightarrow B$, we see $n$ is even. Since $n$ is a positive integer by \eqref{eqn-7-5}, we get $n\geq 2$.
\end{proof}

Since we have $\lambda_f\leq 9$ if $g(B)=1$ (see \eqref{eqn-4-1}) and $\lambda_f <12$ if $g(B)\geq 2$ (see \eqref{eqn-slope-hyperelliptic}), we get the following.
\begin{corollary}
Under the same assumptions as in \autoref{pro-4-1} the following hold.

 (1) If $g(B)=1$, we have
 \begin{equation}\label{eqn-6-13}
g\leq \left\{\begin{aligned}
			&\frac{25}{4}\chi_f^2+\frac{19}{2}\chi_f+2, &\quad& \text{if $g$ is even};\\
			&\frac{25}{8}\chi_f^2+\frac{29}{4}\chi_f+3, &&\text{if $g$ is odd}.
			\end{aligned}\right.
\end{equation}

  (1) If $g(B)\geq 2 $, we have
 \begin{equation}\label{eqn-6-14}
g\leq \left\{\begin{aligned}
			&16\chi_f^2+14\chi_f+2, &\quad& \text{if $g$ is even};\\
			&8\chi_f^2+11 \chi_f+3, &&\text{if $g$ is odd}.
			\end{aligned}\right.
\end{equation}
\end{corollary}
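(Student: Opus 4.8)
The plan is to deduce the Corollary as a purely formal consequence of \autoref{pro-4-1} together with the slope bounds \eqref{eqn-4-1} and \eqref{eqn-slope-hyperelliptic}. The one preliminary observation that makes this work is a monotonicity statement: for a fixed value of $\chi_f$, the right-hand sides of \eqref{eqn-6-12} are increasing functions of the real variable $\lambda_f$ on the half-line $\lambda_f\ge 4$. Indeed $(\lambda_f-4)^2$ is increasing there, and so are the linear coefficients $\tfrac32\lambda_f-4$ and $\tfrac54\lambda_f-4$; hence replacing $\lambda_f$ by any upper bound for it only enlarges the right-hand side of \eqref{eqn-6-12}, and the resulting inequality for $g$ remains valid.

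With this in hand, first suppose $g(B)=1$. Then \eqref{eqn-4-1} gives $\lambda_f\le 9$, and substituting $\lambda_f=9$ into the two branches of \eqref{eqn-6-12} produces $\tfrac{(9-4)^2}{4}\chi_f^2+(\tfrac32\cdot 9-4)\chi_f+2=\tfrac{25}{4}\chi_f^2+\tfrac{19}{2}\chi_f+2$ when $g$ is even and $\tfrac{(9-4)^2}{8}\chi_f^2+(\tfrac54\cdot 9-4)\chi_f+3=\tfrac{25}{8}\chi_f^2+\tfrac{29}{4}\chi_f+3$ when $g$ is odd, which is exactly \eqref{eqn-6-13}. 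Next suppose $g(B)\ge 2$. Then \eqref{eqn-slope-hyperelliptic} gives $\lambda_f<12$, so by monotonicity the right-hand side of \eqref{eqn-6-12} is bounded above by its value at $\lambda_f=12$, namely $\tfrac{(12-4)^2}{4}\chi_f^2+(\tfrac32\cdot 12-4)\chi_f+2=16\chi_f^2+14\chi_f+2$ in the even case and $\tfrac{(12-4)^2}{8}\chi_f^2+(\tfrac54\cdot 12-4)\chi_f+3=8\chi_f^2+11\chi_f+3$ in the odd case; since $g$ is an integer this yields \eqref{eqn-6-14}.

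There is essentially no obstacle here: the Corollary is a formal specialization of \autoref{pro-4-1}, and the only point worth making explicit is the elementary monotonicity of the bounding quadratics in $\lambda_f$ that licenses the substitution $\lambda_f=9$ (resp. $\lambda_f\to 12$). The only thing to double-check is that the standing hypotheses of \autoref{pro-4-1} — namely $g\ge 25$ and $\chi_f\ge 4$ — are indeed in force, which they are since the Corollary is stated under ``Assumptions as in \autoref{pro-4-1}''.
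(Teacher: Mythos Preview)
Your proof is correct and follows exactly the paper's approach: the Corollary is stated immediately after the sentence ``Since we have $\lambda_f\leq 9$ if $g(B)=1$ \ldots\ and $\lambda_f <12$ if $g(B)\geq 2$ \ldots, we get the following,'' so the authors intend precisely the substitution you carry out. You have simply made explicit the (trivial) monotonicity of the bounds in $\lambda_f$ on $[4,\infty)$ that justifies this substitution.
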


	\section{Upper bound on hyperelliptic Albanese fibrations with $p_g=q=1$}\label{sec-pg=q=1}
	The aim in this section is to prove \autoref{thm-upper-chi=1}.
	So we always assume in this section that $S$ is a minimal irregular surface of general type  with $p_g=q=1$
	and  its Albanese fibration $f:S\rightarrow B$ is hyperelliptic.
	In this case, $g(B)=q=1$ and $f$ is not locally trivial otherwise $0=\chi_f=\chi(\mathcal{O}_S)$.
	By \eqref{eqn-2-1}, the relative invariants equal the corresponding invariants of $S$.
	We use the singularity indices $s_i$'s introduced in \autoref{sec-hyperelliptic}.
	Since it is unknown whether $s_2$ is nonnegative, we divide the proof into two cases, depending on whether $s_2<0$ or $s_2\geq 0$.
	The proof of \autoref{thm-upper-chi=1} will be completed in \autoref{pro for s_2<0,p_g=1}, \autoref{prop-s_s>0} and \autoref{prop-s_s>0-2}.
	\begin{proposition}\label{pro for s_2<0,p_g=1}
	Let $S$ be a minimal   surface of general type with $p_g(S)=q(S)=1$.
		Suppose that the Albanese fibration $f:\,S\to B$ of $S$ is hyperelliptic of genus $g$.
		If $s_2<0$, then $g\leq 5$.
	\end{proposition}	
	\begin{proof}
		We first claim that
	\begin{claim}\label{claim-1}
		If $s_2<0$, then we have $K_S^2\leq 7$.
	\end{claim}
        \begin{proof}[Proof of \autoref{claim-1}]
        	If $s_2<0$, using notations in \autoref{sec-hyperelliptic},
        	we see that $\hat{R}$ must contain some isolated curve $C$ with $C^2\neq -2$. So there must be some smooth rational curves $\widetilde{C}$ contained in fibers of $f$.
        	Assume $\widetilde{C}^2=-n$, by \cite[Theorem\,1.1]{Miy84} we have
        	$$\frac{3}{2}\leq \frac{(n+1)^2}{3n}\leq e(S)-\frac{1}{3}K_S^2=\frac{4}{3}(9\chi-K_S^2).$$
        	Hence we get $K_S^2\leq 9-\frac{9}{8}$, i.e. $K_S^2\leq 7$ since $K_S^2$ is an integer.
        \end{proof}
	Come back to the proof of \autoref{pro for s_2<0,p_g=1}.  Since $s_2$ is an integer and we have assumed that $s_2<0$, we see that $s_2\leq -1$.	By \autoref{key lemma of Xiao}, we have
		$$\begin{aligned}
		&K_f^2-\frac{8g-14}{g-1}\chi_f\\ =\,&-\frac{g-2}{2(g-1)}s_2+\frac{g^2+2g-5}{2(g-1)}s_{g+2}+
		\sum_{k=1}^{[\frac{g}{2}]}\left(\frac{2k(g-k)}{g-1}-1\right)s_{2k+1}+\sum_{k=3}^{[\frac{g+1}{2}]}\left(\frac{k(g-k+1)}{g-1}-2\right)s_{2k}\\
		\geq\,&\frac{g-2}{2(g-1)}.
		\end{aligned}$$
		Hence
		$$\frac{g-2}{2(g-1)} \leq K_f^2-\frac{8g-14}{g-1}\chi_f \leq 7-\frac{8g-14}{g-1}=\frac{7-g}{g-1}.$$
		It follows that $g\leq \frac{16}{3}$, i.e., $g\leq 5$ as required.
	\end{proof}

	In the remaining part of this section, we assume that  $s_2\geq 0$.
	We first claim that
	\begin{claim}
	Let $S$ be a minimal   surface of general type with $p_g(S)=q(S)=1$.
		Suppose that the Albanese fibration $f:\,S\to B$ of $S$ is hyperelliptic of genus $g$.
 Suppose that $s_2\geq 0$. Then
		\begin{equation}\label{eqn-3-1}
		\left\{\begin{aligned}
		&\text{if $g\geq 6$,} &\quad& \text{then $s_{g+2}=0$;}\\
		&\text{if $g\geq 11$,} && \text{then $s_{2k+1}=0,\quad \forall~k\geq 2$;}\\
		&\text{if $g\geq 13$,} && \text{then $s_{2k}=0,\quad \forall~k\geq 6$.}
		\end{aligned}\right.
		\end{equation}
	\end{claim}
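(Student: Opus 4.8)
The strategy is to feed the assumption $\chi_f = \chi(\mathcal{O}_S) = 1$ into the explicit formula for $\chi_f$ from \autoref{key lemma of Xiao} and read off constraints on the singularity indices by sheer size. Recall
$$\chi_f=\frac{gs_2+(g^2-2g-1)s_{g+2}}{4(2g+1)}+\sum_{k=1}^{[\frac{g}{2}]}\frac{k(g-k)}{2g+1}s_{2k+1}+\sum_{k=2}^{[\frac{g+1}{2}]}\frac{k(g-k+1)}{2(2g+1)}s_{2k}.$$
Since all $s_i$ with $i\ge 3$ are nonnegative and, by hypothesis, $s_2\ge 0$, every summand above is nonnegative. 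Hence $\chi_f=1$ forces each individual term to be at most $1$. The plan is simply to compare the coefficient of each $s_i$ with $1$: whenever the coefficient exceeds $1$, the corresponding index must vanish.

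First I would handle $s_{g+2}$. Its coefficient is $\frac{g^2-2g-1}{4(2g+1)}$; solving $\frac{g^2-2g-1}{4(2g+1)}>1$ gives $g^2-2g-1>8g+4$, i.e.\ $g^2-10g-5>0$, which holds for $g\ge 11$. So for $g\ge 11$ one already gets $s_{g+2}=0$; to obtain the sharper threshold $g\ge 6$ claimed in \eqref{eqn-3-1} I would not use the crude formula alone but combine it with the remark just after \autoref{lemma for psi} (that $s_{g+2}=0$ automatically when $g$ is even) together with \eqref{(1.25g+1)chi_f}, which has the strictly larger denominator $5g+4$ in place of $4(2g+1)$ but the same numerator $g^2-1$ for the $s_{g+2}$-coefficient — wait, one must be careful: \eqref{(1.25g+1)chi_f} has coefficient $\frac{g^2-1}{5g+4}$, and $\frac{g^2-1}{5g+4}>1\iff g^2-5g-5>0\iff g\ge 6$. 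That is exactly the threshold wanted, so the first line of \eqref{eqn-3-1} follows from \eqref{(1.25g+1)chi_f} and the nonnegativity of all its terms (valid since $s_2\ge 0$). For the second line, in \eqref{(1.25g+1)chi_f} the coefficient of $s_{2k+1}$ is $\frac{(4k-1)g-4k^2}{5g+4}$; for $k\ge 2$ this is minimized over the relevant range at $k=2$, giving $\frac{7g-16}{5g+4}$, and $\frac{7g-16}{5g+4}>1\iff 2g>20\iff g\ge 11$ (one must also check the coefficient is increasing in $k$ for $k\ge 2$, or at least bounded below by its value at $k=2$, over $2\le k\le[g/2]$; this is a short monotonicity check). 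For the third line, the coefficient of $s_{2k}$ in \eqref{(1.25g+1)chi_f} is $\frac{2(k-1)(g-k)}{5g+4}$; for fixed $g$ this is a downward parabola in $k$, so over $k\ge 6$ its minimum on the admissible range $6\le k\le[\frac{g+1}{2}]$ is attained at an endpoint, and one checks $\frac{2(k-1)(g-k)}{5g+4}>1$ at $k=6$ amounts to $10(g-6)>5g+4$, i.e.\ $5g>64$, i.e.\ $g\ge 13$; at the other endpoint $k=[\frac{g+1}{2}]$ one verifies the same inequality directly.

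The execution is elementary — each line of \eqref{eqn-3-1} reduces to one quadratic or linear inequality in $g$ plus a one-line monotonicity remark on the coefficient as a function of $k$. The main thing to be careful about is the third line: because the $s_{2k}$-coefficient is not monotone in $k$ (it rises and then falls), I must check the inequality at \emph{both} ends of the range $6\le k\le[\frac{g+1}{2}]$ rather than at a single value, and confirm that these two endpoint checks together cover all intermediate $k$ by concavity. I expect that endpoint bookkeeping — and making sure the index ranges in \eqref{(1.25g+1)chi_f} exactly match those in \autoref{key lemma of Xiao} so that no term is silently dropped — to be the only real place where care is needed; everything else is immediate from nonnegativity of the summands once $s_2\ge 0$ is assumed.
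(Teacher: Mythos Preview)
Your proposal is correct and follows exactly the paper's approach: feed $\chi_f=1$ into \eqref{(1.25g+1)chi_f}, use nonnegativity of each summand on the right, and force vanishing of any $s_i$ whose coefficient exceeds~$1$. One minor simplification for the third line: the quadratic $(k-1)(g-k)$ has its vertex at $k=(g+1)/2$, which is the right end of the admissible range, so the coefficient is monotone increasing on $6\le k\le\lfloor\tfrac{g+1}{2}\rfloor$ and only the endpoint $k=6$ needs checking.
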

\begin{proof}
	If $s_{g+2}>0$, then by \eqref{eqn-2-1} and \eqref{(1.25g+1)chi_f} we have
	$$1=\chi=\chi_f \geq \frac{g^2-1}{5g+4}s_{g+2} \geq \frac{g^2-1}{5g+4},\quad \Longrightarrow\quad g^2-5g-5 \leq 0.$$
	Hence $g<6$. This shows that $s_{g+2}=0$ if $g\geq 6$.
	The other two inequalities can be proved similarly using \eqref{(1.25g+1)chi_f}, and are left to the readers.
\end{proof}

	\begin{proposition}\label{prop-s_s>0}
	Let $S$ be a minimal   surface of general type with $p_g(S)=q(S)=1$.
		Suppose that the Albanese fibration $f:\,S\to B$ of $S$ is hyperelliptic of genus $g$.
	If $g\geq 13$, then there is only one possible case for $(K_S^2,g)$: $K_S^2=8, g=14$.
	In particular, $g\leq 14$.
   \end{proposition}
\begin{proof}
	Since $g\geq 13$, by \eqref{eqn-3-1} together with \autoref{key lemma of Xiao},
	\begin{eqnarray}
		2g+1&=&\frac{g}{4}s_2+(g-1)s_3+(g-1)s_4+\frac{3}{2}(g-2)s_6+2(g-3)s_8+\frac{5}{2}(g-4)s_{10}, \label{eqn-3-2}\\
		\frac{gn}{2}-1&=&s_3+s_4+3s_6+6s_8+10s_{10}.\label{eqn-3-3}
	\end{eqnarray}
	
	(1) We show first that $s_{10}=0$. Otherwise by \eqref{eqn-3-2}, $s_{10}=1$ and
	$$11-\frac{g}{2}=\frac{g}{4}s_2+(g-1)s_3+(g-1)s_4+\frac{3}{2}(g-2)s_6+2(g-3)s_8.$$
	Since $g\geq 13$ and each  $s_i$ is a nonnegative integer, it follows that  $s_3=s_4=s_6=s_8=0$, and hence $11-\frac{g}{2}=\frac{g}{4}s_2$, i.e., $s_2+2=\frac{44}{g}$,
	which implies that $s_2=0$ and $g=22$.
	Substitute into the formulas of \autoref{key lemma of Xiao},
	one obtains that $K_S^2=K_f^2=10>9=9\chi(\mathcal{O}_S)$,
	which is a contradiction.
	
	(2) We show that if $s_8\neq 0$, then
	$$s_8=1,~s_6=s_4=s_3=0, ~s_2=2,~ g=14, K_S^2=8.$$
	In fact, by (1), we have already proven $s_{10}=0$.
	According to \eqref{eqn-3-2}, $s_8=1$ and
	$$7=\frac{g}{4}s_2+(g-1)s_3+(g-1)s_4+\frac{3}{2}(g-2)s_6.$$
    Since $g\geq 13$, it follows that $s_3=s_4=s_6=0$, and $gs_2=28$.
    Hence either $g=28,s_2=1$ or $g=14,s_2=2$.
    The first case is impossible; otherwise, by \eqref{eqn-3-3} one has $n=\frac{1}{2}$, which is a contradiction since $n$ is an integer. Finally, $K_S^2=8$ follows from the formula in \autoref{key lemma of Xiao}.
	
	(3) We show that $s_6=0$.
	Otherwise we would have $s_6=1$ again by \eqref{eqn-3-2}.
	Moreover, by the arguments (1) and (2) above, we have
	$s_{10}=s_8=0$. Moreover, we have $$\frac{g}{2}+4=(2g+1)-\frac{3}{2}(g-2)
	=\frac{g}{4}s_2+(g-1)s_3+(g-1)s_4.$$
	Hence $s_3=s_4=0$, $s_2=3$ and $g=16$.
	Substitute into \eqref{eqn-3-3}, one obtains that $n=\frac{1}{2}$, which gives a contradiction as $n$ is an integer.
	
	(4) We show that $s_3=s_4=0$.
	Otherwise by (2), $s_8=0$. Combining with (1) and (2) above,
	we have $s_6=s_8=s_{10}=0$, and  by \eqref{eqn-3-2}
	$$2g+1=\frac{g}{4}s_2+(g-1)(s_3+s_4).$$
	Hence $s_3+s_4\leq 2$.
	If $s_3+s_4=2$, then $gs_2=12$, which is impossible since $g\geq 13$.
	If $s_3+s_4=1$, then by \eqref{eqn-3-3} one gets $gn=4$, which is impossible since $g\geq 13$ and  $n\geq 1$.
	
	In conclusion, if $g\geq 13$, then except the possible case $(K_S^2,g)=(8,14)$, one has $s_i=0$ for all $i>2$.
	Hence by \eqref{eqn-3-3} it holds $gn=2$, which is impossible since $n$ is an integer.
	This completes the proof.
\end{proof}

	\begin{proposition}\label{prop-s_s>0-2}
	Let $S$ be a minimal   surface of general type with $p_g(S)=q(S)=1$.
		Suppose that the Albanese fibration $f:\,S\to B$ of $S$ is hyperelliptic of genus $g$.
	\begin{enumerate}
		\item The genus $g\neq 12$.
		\item If $g=11$, then $K_S^2=8$.
	\end{enumerate}
\end{proposition}
   \begin{proof}
   	(1) Suppose that $g=12$. Then by \eqref{eqn-3-1} together with \autoref{key lemma of Xiao},
   		$$25=2g+1=3s_2+11s_3+11s_4+15s_6+18s_8+20s_{10}+21s_{12}.$$
   	Note that all the $s_i$'s are non-negative integers.
   	It is not difficult to show that $s_6=s_8=s_{10}=s_{12}=0$.
   	Thus $3s_2+11(s_3+s_4)=25.$
   	It follows that $s_2=1$ and $s_3+s_4=2$.
   	By \autoref{key lemma of Xiao} one has
   	$6n=\frac{gn}{2}=1+s_3+s_4=3$, which is impossible since $n$ is an integer.
   	
   	(2) Suppose that $g=11$. Similar as above, by \eqref{eqn-3-1} together with \autoref{key lemma of Xiao},
   	$$46=2(2g+1)=\frac{11}{2}s_2+20s_3+20s_4+27s_6+32s_8+35s_{10}+36s_{12}.$$
   	In particular, $s_2$ is even.
   	If $s_2\geq 6$, then $$20s_3+20s_4+27s_6+32s_8+35s_{10}+36s_{12}\leq 13.$$
   	This is impossible since all $s_i$'s are non-negative integers.
   	If $s_2=4$, then
   	$$20s_3+20s_4+27s_6+32s_8+35s_{10}+36s_{12}=24.$$
   	It is again impossible.
   	If $s_2=2$, then
   	$$20s_3+20s_4+27s_6+32s_8+35s_{10}+36s_{12}=35.$$
   	Hence $s_{10}=1$ and $s_i=0$ for other $i>2$.
   	Combining with \autoref{key lemma of Xiao}, one computes $K_S^2=8$.
   	If $s_2=0$, then
   	$$20s_3+20s_4+27s_6+32s_8+35s_{10}+36s_{12}=46.$$
   	One checks again that this is impossible since all $s_i$'s are non-negative integers.
   	This completes the proof.
   \end{proof}
	
\begin{remark}
	Let $S$ be a minimal   surface of general type with $p_g(S)=q(S)=1$.
		Suppose that the Albanese fibration $f:\,S\to B$ of $S$ is hyperelliptic of genus $g$.
	By a similar argument as above, one can show that $g\neq 9$. More precisely, the list of all possibilities for $(K_S^2,g)$ is as follow:
$$\begin{tabular}{|c|c|} \hline
$~K_S^2~$ & $g$ \\\hline
$9$ & $4,~6,~8,~10$ \\\hline
$8$ & $3,~4,~5,~6,~7,~8,~10,~11,~14~$ \\\hline
$7$ & $3,~4,~5,~6$ \\\hline
$6$ & $2,~3,~4,~5,~6,~7,~8$ \\\hline
$5$ & $2,~3,~4$ \\\hline
$4$ & $2,~3,~4$ \\\hline
$~3,~2~$ & $2$ \\\hline
\end{tabular}$$
If $K_S^2=2$, then $g=2$ by the slope inequality \eqref{eqn-slope}.
If $K_S^2=3$, then one can only get $g\leq 4$ by the slope inequality.
Catanese-Ciliberto \cite[Prop\,5.6 \& Thm\,5.7]{cc91} proved that in this case $g=2$.
Our method here provides a different proof of this result.
Finally, it would be interesting to construct examples of hyperelliptic Albanese fibrations (or exclude the existence) in the above list $(K_S^2,g)$.
	\end{remark}	

%{\color{red}
%\begin{question}
%	
%	(1) Can we use \eqref{eqn-6-1} to give an upper bound inequality  for $g$ without $n$ ?
%	
%	(2) Can we construct examples with $g(B)=1$ and the equality in \eqref{eqn-6-1} holds for $n=1$ ?
%	
%	(3) Can we give classify those surfaces with hyperesthetic fibrations such that equality in \eqref{eqn-6-1}   or (1) of \autoref{theorem upper bound for g} holds ?
%\end{question}
%}

	\section{Hyperelliptic Albanese fibrations with a quadratic Albanese genus}\label{sec-example}
	In this section, we will construct several examples.  \autoref{thm-example} and \autoref{thm-example-2} show  that the equalities in \autoref{pro-4-1} can be reached for both $g$ odd and $g$ even; in particular, the genus $g$ of hyperelliptic Albanese fibrations can be as large as a quadratic function in $\chi(\mathcal{O}_S)$.  \autoref{example-3}   indicates that the equality in \autoref{theorem upper bound for g} (1) is also sharp.
	\begin{example}\label{thm-example}
		There exist a sequence of minimal irregular surfaces $S_k$ ($k\geq 3$ and odd) of general type with $q(S_k)=1$,
		such that their Albanese maps induce a hyperelliptic fibration $f_k:\,S_k \to E$ of odd genus $g_k=\frac{(k-1)(k+3)}{2}+1$, and that
		$$\chi(\mathcal{O}_{S_k})=\chi_{f_k}=\frac{3k-1}{2},\qquad K_{S_k}^2=K_{f_k}^2=8k-8.$$
%		\begin{enumerate}
%			\item  the genus $g_k>\max\left\{\frac{1}{81}\big(K_{S_k}^2\big)^2, ~\frac{16}{25}\big(\chi(\mathcal{O}_{S_k})\big)^2\right\}$; and that
%			\item  both $K_{S_k}^2$ and $\chi(\mathcal{O}_{S_k})$ tend to infinity when $k$ tends to infinity.
%		\end{enumerate}
	\end{example}

	We use notations introduced in \autoref{sec-hyperelliptic}.
	To construct the required hyperelliptic fibrations is equivalent to find appropriate data $(P,R,\delta)$ with suitable singularity indices $s_i$'s.

	For any integer $m\geq 1$, let  $G_m=\mathbb{Z}/{m\mathbb{Z}}$ be the cyclic group of order $m$. Assume that   $\sigma\in G_m$ acts  on $\mathbb{P}^1$
	by $\sigma (t)=\xi t$ (where $\xi$ is any $m$-th primitive unit root), and $\sigma\in G_m$ acts  on some elliptic curve $E_0$ by translation $\sigma(x)=x+p$, where $p$ is a torsion point of order $m$. Let $P:=(E_0\times \mathbb{P}^1)/{G_m}$, where $G_m$ acts on $E_0\times \mathbb{P}^1$ by the diagonal action. Then we have the following commutative diagram
	
	$$\xymatrix{E_0\times \mathbb{P}^1 \ar[rr]^-{\Pi}\ar[d] && P:=(E_0\times \mathbb{P}^1)/{G_m} \ar[d]^h\\
		E_0\ar[rr]^{m:1} &&  E:=E_0/G_m}
	$$	
	Then $h:P\rightarrow E$ is a $\mathbb{P}^1$-bundle over the elliptic curve $E$. In fact, $P\cong \mathbb{P}(\mathcal{O}_{E}\oplus \mathcal{N})$, and it admits two sections $C_0,C_\infty$ with $C_0^2=C_\infty^2=0$, where $\mathcal{N}$ is a torsion line bundle of order $m$ over $E$.
	Denote by $\Gamma$ the general fiber of $h$.
	As a geometric ruled surface over an elliptic curve, the Picard group of $P$ is simply:
	$$\Pic(P)=\mathbb{Z}[C_0]\oplus h^*\Pic(E).$$
	
	\begin{lemma}\label{lem-5-1}
		Let $h:P\rightarrow E$ be the $\mathbb{P}^1$-bundle above.
		For any irreducible curve $D\subseteq P$, let $D \sim a C_0+b \Gamma$. Then
		$a\geq0$, $b\geq 0$, and $a+b>0$.
		Moreover, if $b=0$, then $a\geq m$ unless $D=C_0$ or $C_{\infty}$.
	\end{lemma}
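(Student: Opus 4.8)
The plan is to read off the numerical structure of the ruled surface $P$, deduce the three inequalities from elementary intersection theory, and then treat the refined statement by pulling $D$ back along the \'etale Galois cover $\Pi\colon E_0\times\mathbb{P}^1\to P$.

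Since $\mathcal{N}$ is torsion we have $\deg\mathcal{N}=0$, hence $C_0^2=0$, and $\mathrm{Num}(P)=\mathbb{Z}[C_0]\oplus\mathbb{Z}[\Gamma]$ with $C_0^2=\Gamma^2=0$ and $C_0\cdot\Gamma=1$. As $C_\infty$ is a section, $C_\infty\sim C_0+c\Gamma$ for some $c$, and $C_\infty^2=0$ forces $c=0$; thus $C_\infty\sim C_0$. Consequently, for $D\sim aC_0+b\Gamma$ one has $a=D\cdot\Gamma$ and $b=D\cdot C_0=D\cdot C_\infty$. If $D$ is irreducible and distinct from $C_0,C_\infty,\Gamma$, it shares no component with any of them, so both intersection numbers are nonnegative, giving $a\geq0$ and $b\geq0$; the three exceptional cases are immediate, with $(a,b)=(1,0)$ or $(0,1)$. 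Since a nonzero effective divisor is never numerically trivial, $(a,b)\neq(0,0)$, whence $a+b>0$.

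For the last assertion, assume $b=0$ and $D\neq C_0,C_\infty$. The diagonal $G_m$-action on $E_0\times\mathbb{P}^1$ is free, because a nontrivial power of $\sigma$ translates the $E_0$-coordinate by a nonzero point; hence $\Pi$ is \'etale of degree $m$, with $\Pi^*C_0=E_0\times\{0\}$, $\Pi^*C_\infty=E_0\times\{\infty\}$, and $\Pi^*\Gamma$ a sum of $m$ fibres of the second projection. Writing $F_1,F_2$ for the two ruling classes on $E_0\times\mathbb{P}^1$, this gives $\Pi^*C_0\sim F_1$, $\Pi^*\Gamma\sim mF_2$, so $\widetilde D:=\Pi^*D\sim aF_1$. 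By the projection formula $\widetilde D\cdot F_1=m\,(D\cdot C_0)=0$, so every component of $\widetilde D$ is a fibre $E_0\times\{t\}$ of the second projection; and $\widetilde D$ avoids $E_0\times\{0,\infty\}=\Pi^{-1}(C_0\cup C_\infty)$, since $D$ is disjoint from $C_0\cup C_\infty$ (the intersection numbers vanish and $D$ is irreducible). Because $\Pi$ is \'etale, $\widetilde D$ is reduced, so $\widetilde D=\bigsqcup_j E_0\times\{t_j\}$ with the $t_j\in\mathbb{C}^*$ distinct, and $G_m$ acts on this set via $t\mapsto\xi t$. Irreducibility of $D=\Pi(\widetilde D)$ forces this action to be transitive; since $\xi$ is a primitive $m$-th root of unity and no $t_j$ is $0$ or $\infty$, the orbit has exactly $m$ elements. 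Hence $\widetilde D\sim mF_1$, and comparison with $\widetilde D\sim aF_1$ yields $a=m$, in particular $a\geq m$.

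I expect the last paragraph to be the only delicate point: one must combine that $\widetilde D\cdot F_1=0$ makes $\widetilde D$ a union of second-projection fibres, that $\Pi$ being \'etale makes $\widetilde D$ reduced (so that counting $m$ distinct such fibres really produces the class $mF_1$, not a proper multiple), and that irreducibility of the quotient $D$ together with the hypothesis $D\neq C_0,C_\infty$ — which keeps the $t_j$ away from the fixed points $0,\infty$ of multiplication by $\xi$ — pins the number of fibres down to $m$. The rest is routine intersection bookkeeping on $P$, and the argument in fact delivers the sharp equality $a=m$.
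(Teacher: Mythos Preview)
Your proof is correct and follows essentially the same strategy as the paper: the first part is elementary intersection theory on the numerically trivial ruled surface (the paper simply cites \cite[\S\,V.2]{har77}, while you spell it out), and the second part pulls back along the \'etale Galois cover $\Pi$ and uses the $G_m$-action on the horizontal components of $\Pi^*D$. Your version is in fact more careful than the paper's---you make explicit the reducedness of $\Pi^*D$ from \'etaleness and the transitivity of the $G_m$-action from the irreducibility of $D$, and thereby obtain the sharp conclusion $a=m$ rather than merely $a\geq m$.
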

	\begin{proof}
		We prove here that $a\geq m$ if $D\neq C_0$, $D\neq C_{\infty}$  and $b=0$. The rest is clear by \cite[\S\,V.2]{har77}.
		Let $D'=\Pi^{*}(D) \subseteq E_0\times \bbp^1$.
		Since $D\sim aC_0$, it follows that $D' \sim a C'$, where $C'$ stands for $E_0\times \{x\} \subseteq E_0\times \bbp^1$.
		In particular, $D'$ consists of several sections (the number is exactly $a$) of the trivial $\bbp^1$-bundle $E_0\times \bbp^1 \to E_0$.
		On the other hand, as $D\neq C_0$ and $D\neq C_{\infty}$,
		it follows that any irreducible component in $D'$ maps to another component under any non-identity element of the Galois group $G_m$.
		It follows that the number of irreducible components in $D'$ is a multiple of $m$,
		and in particular $a\geq m$ as required.		
	\end{proof}
%	For any effective curve $D\subseteq P$, one has
%	$$a\geq 0,~b\geq 0,\text{~and~}a+b>0,\qquad \text{if~}D\sim aC_0+b\Gamma.$$
%%	For any $x\in P\setminus \{C_0\cup C_\infty\}$, there exist exactly two irreducible curves through $x$ with self-intersection 0: the fiber $\Gamma_x$ and $D_x\sim mC_0$.
%	
%	Now take $m=2g+3$, then  $P=(E\times \mathbb{P}^1)/G_{2g+3}$ and $K_P=K_{P/{E_m}}\sim 2C_0$. Take $L=\mathcal{O}_P((2g+2)C_0+2\Gamma)$, then
%	$$h^0(L)=\chi(L)=\chi(\mathcal{O}_P)+\frac{1}{2}(L^2-K_P L)=0+\frac{1}{2}(8g+8+4)=4g+6.$$
%	Take $x\in P\setminus \{C_0\cup C_\infty\}$ and suppose
%$$4g+6=\frac{2k(2k+1)}{2}+1=2k^2+k+1,$$
% then there exists effective divisor $D\in |L|$ such that $D$
%	contains $x$ as a singularity of order no less than $2k$, i.e. $\order_x(D)\geq 2k$.
	
	\begin{lemma}\label{lem-5-2}
		Let $h:P\rightarrow E$ be the $\mathbb{P}^1$-bundle above.
		For any $p\in P\setminus\{C_0\cup C_{\infty}\}$, let $\tau:\,\wt{P} \to P$ be the blow-up centered at $p$, and $\mathcal{E}$ be the exceptional curve.
		Then for any odd $3\leq k\leq m-2$, there exists a smooth divisor $\wt{R}\in \left|\tau^*\big((2g_k+2)C_0+2\Gamma\big)-2k\mathcal{E}\right|$, where $g_k=\frac{(k-1)(k+3)}{2}+1$.
	\end{lemma}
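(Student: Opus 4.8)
The plan is to reduce the statement to a base-point-freeness assertion and then apply Bertini's theorem. One first observes the elementary identity $2g_k+2=(k-1)(k+3)+2=(k+1)^2$, so that we are looking for a smooth member of $|M|$ on $\wt P$, where $M:=\tau^*\big((k+1)^2C_0+2\Gamma\big)-2k\mathcal E$. Since, over $\Cbb$, a general member of a base-point-free linear system on a smooth projective surface is smooth, it suffices to prove that $|M|$ is base-point-free.

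I would write $M=K_{\wt P}+D$. Since $E$ is an elliptic curve and $\mathcal N$ has degree $0$, one has $K_P\equiv-2C_0$, hence $K_{\wt P}=\tau^*K_P+\mathcal E$ and
$$D:=M-K_{\wt P}\equiv\tau^*\big(((k+1)^2+2)C_0+2\Gamma\big)-(2k+1)\mathcal E,\qquad D^2=4\big((k+1)^2+2\big)-(2k+1)^2=4k+11.$$
The key step is to show $D$ is ample by the Nakai--Moishezon criterion; since $D^2>0$, this reduces to checking $D\cdot C>0$ for every irreducible curve $C\subset\wt P$. One has $D\cdot\mathcal E=2k+1$, $D\cdot(\tau^*\Gamma-\mathcal E)=k^2+2$ (the strict transform of the fibre through $p$), and $D\cdot\tau^*C_0=2$ (likewise $2$ for $C_\infty$, and $(k+1)^2+2$ for a fibre not through $p$). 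For the strict transform $\wt C$ of an irreducible curve $C\subset P$ passing through $p$ with multiplicity $\mu$, write $C\sim aC_0+b\Gamma$ with $a,b\ge 0$ (\autoref{lem-5-1}); then $\mu\le C\cdot\Gamma_p=a$ when $C\neq\Gamma_p$, and, crucially, since $P$ is ruled over an elliptic curve it contains no rational curve dominating $E$, so any irreducible curve dominating $E$ has geometric genus $\ge 1$, whence $\binom{\mu}{2}\le p_a(C)-1=b(a-1)$ when $b\ge 1$; moreover if $b=0$ and $C\neq C_0,C_\infty$ then \autoref{lem-5-1} gives $a\ge m\ge k+2$ and $C$ is smooth, so $\mu\le 1$. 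Inserting these into $D\cdot\wt C=((k+1)^2+2)b+2a-(2k+1)\mu$ and minimising over the admissible $a$ (the minimum occurs at $a=\mu$, which forces $\mu\le 2b$, or at $a=1+\mu(\mu-1)/(2b)$), one obtains $D\cdot\wt C\ge 4>0$ in every case. Hence $D$ is ample.

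Since $D$ is ample with $D^2\ge 5$, a Reider-type argument applies: if $|M|=|K_{\wt P}+D|$ had a base point, there would be an effective divisor $E'$ on $\wt P$ through that point with $D\cdot E'=1$ and $(E')^2=0$ (the possibility $D\cdot E'=0$ being excluded by ampleness of $D$). Writing an irreducible component $\wt C$ of $E'$ as the strict transform of some $C\sim aC_0+b\Gamma$ through $p$ with multiplicity $\mu$, the relations $\wt C^2=2ab-\mu^2=0$ and $\binom{\mu}{2}\le b(a-1)$ force $\mu\ge 2b$, while $D\cdot\wt C=1$ rewrites as $((k+1)^2+2)b^2-(2k+1)\mu b+(\mu^2-b)=0$; its left-hand side, a quadratic in $\mu$ opening upwards, is strictly positive at $\mu=2b$ and at its vertex $\mu=(2k+1)b/2$, and since $(2k+1)b/2\ge 2b$ it is positive for every $\mu\ge 2b$ --- a contradiction (the degenerate cases $b=0$, or $C$ a fibre or $C_0$ or $C_\infty$, being excluded directly by the intersection numbers above). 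Therefore $|M|$ is base-point-free, and by Bertini's theorem a general member $\wt R\in|M|=\big|\tau^*\big((2g_k+2)C_0+2\Gamma\big)-2k\mathcal E\big|$ is smooth.

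The step I expect to be the main obstacle is the proof that $D$ is ample --- and, along with it, the exclusion of Reider's exceptional divisor: both hinge on bounding the multiplicity $\mu$ at $p$ of an irreducible curve $C\subset P$ in terms of its numerical class $aC_0+b\Gamma$, and the decisive improvement of the naive bound $\binom{\mu}{2}\le p_a(C)$ to $\binom{\mu}{2}\le p_a(C)-1$ rests on the absence of rational curves dominating $E$ on the elliptic ruled surface $P$. An alternative route avoiding Reider is to prove instead that the base locus of $|M|$ is contained in $\mathcal E$ and that the restriction $H^0(\wt P,M)\to H^0(\mathcal E,\mathcal O_{\mathcal E}(2k))$ is surjective --- the latter being Kawamata--Viehweg vanishing for $M-\mathcal E=K_{\wt P}+(D-\mathcal E)$ once $D-\mathcal E$ is seen to be nef and big --- so that a general member of $|M|$ meets $\mathcal E$ transversally in $2k$ reduced points and is smooth off $\mathcal E$.
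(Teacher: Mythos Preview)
Your proposal is correct and follows essentially the same approach as the paper. Both write $\wt R\sim K_{\wt P}+L$ with $L^2=4k+11$, invoke Reider's theorem, and control $L\cdot\wt D$ for every irreducible curve $\wt D$ using precisely the genus inequality $\binom{\mu}{2}\le b(a-1)$ (which is the paper's \eqref{eqn-5-1}) together with \autoref{lem-5-1} for the $b=0$ case. The only organizational difference is that the paper proves directly $L\cdot\wt D\ge 2$ for every irreducible $\wt D$ (so Reider's obstruction is excluded in one stroke), whereas you first prove ampleness and then separately rule out the curve with $D\cdot E'=1$, $(E')^2=0$; the underlying computations coincide. One small slip: your blanket claim ``$D\cdot\wt C\ge 4$ in every case'' is off by one in the case $b=0$, $a\ge m$, $\mu=1$, where $D\cdot\wt C\ge 2m-(2k+1)\ge 3$; this does not affect ampleness nor the Reider step.
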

\begin{proof}
	It is enough to prove that the linear system $\left|\tau^*\big((2g_k+2)C_0+2\Gamma\big)-2k\mathcal{E}\right|$ is base-point-free.
	Note that $K_{\wt P}=\tau^{*}K_{P}+\mathcal{E}$.
	It follows that
	$\tau^*\big((2g_k+2)C_0+2\Gamma\big)-2k\mathcal{E}
	=K_{\wt P}+L$, where
	$$L=\tau^*\big((2g_k+2)C_0+2\Gamma-K_P\big)-(2k+1)\mathcal{E}.$$
	Note that numerically we have $K_P \sim -2C_0$. Hence $L^2=4k+11\geq 15>0$.
	We claim that
	\begin{claim}\label{claim-5-1}
		For any irreducible curve $\wt{D}\subseteq \wt P$, $L\cdot \wt{D} \geq 2$. In particular, $L$ is ample.
	\end{claim}
	Assuming the above claim, then one sees easily that the linear system
	$$\left|\tau^*\big((2g_k+2)C_0+2\Gamma\big)-2k\mathcal{E}\right|=\left|K_{\wt P}+L\right|$$
	is base-point-free by applying Reider's method \cite{reider88}.
	Hence a general divisor $\wt R \in \left|K_{\wt P}+L\right|$
	is smooth by Bertini's theorem.
	
	It remains to prove \autoref{claim-5-1}.
	Let $\wt{D}\subseteq \wt P$ be any irreducible curve.
	If $\wt{D}=\mathcal{E}$, or $\wt{D}$ is the strict transform of any fiber of $h:\,P\to E$,
	then one checks easily that
	$$L\cdot \wt{D}>2.$$
	Otherwise, $\wt{D}\sim \tau^*(aC_0+b\Gamma)-\beta\mathcal{E}$ with $a>0$,
	i.e., the restriction $h|_{\wt{D}}:\, \wt{D} \to E$ is surjective.
	Thus
	\begin{equation}\label{eqn-5-1}
		0\leq 2p_a(\wt{D})-2 =(K_{\wt P}+\wt{D})\cdot \wt{D},\qquad\text{namely,}\qquad
		2(a-1)b\geq \beta(\beta-1).
	\end{equation}
	By direct computation,
	$$L\cdot \wt{D} = 2\big(a+(g_k+2)b\big)-(2k+1)\beta=2a+(k+1)^2b-(2k+1)\beta+2b.$$

If $\beta\geq k+2$, then
	$$\begin{aligned}
	L\cdot \wt{D} &\,= 2+2(a-1)+(k+1)^2b-(2k+1)\beta+2b\\
	&\,\geq 2+2\sqrt{2(k+1)^2(a-1)b}-(2k+1)\beta\\
	&\,\geq 2+2(k+1)\sqrt{\beta(\beta-1)}-(2k+1)\beta\\
	&\,> 2;
	\end{aligned}$$
	if $\beta\leq k+1$, then
	$$\begin{aligned}
	L\cdot \wt{D} &\,= 2+2(a-1)+(k+1)^2b-(2k+1)\beta+2b\\
	%&\,\geq 2+2(a-1)+(k+1)^2b-(2k+1)k\\
	&\,\geq \left\{\begin{aligned}
		&2+2(k+1)^2-(2k+1)(k+1)=k+3> 2, &\quad&\text{if~}b\geq 2;\\
		&2+\beta(\beta-1)+(k+1)^2-(2k+1)\beta=2+(k+1-\beta)^2> 2, && \text{if~}b=1,\\
		&2, && \text{if $b=0$ and $a=1$};\\
		&2a-(2k+1)\beta \geq 2m-(2k+1)>2, && \text{if $b=0$ and $a\geq 2$};
	\end{aligned}\right.
\end{aligned}$$

	Here we explain a little more about the case when $b=0$:
	if $a=1$, then $\wt{D}$ must be the strict transform of $C_0$ or $C_\infty$ by \autoref{lem-5-1},
	which implies that $\beta=0$ since $p\in P\setminus\{C_0\cup C_{\infty}\}$,
	and hence $L\cdot \wt{D}=2$;
	if $a\geq 2$, then $\beta\leq 1$ by \eqref{eqn-5-1}, and $a\geq m$ by \autoref{lem-5-1}.
\end{proof}

We come back to the construction of the required hyperelliptic Albanese fibrations in \autoref{thm-example}.
	For any odd $k\geq 3$, let $\wt{R}\in \left|\tau^*\big((2g_k+2)C_0+2\Gamma\big)-2k\mathcal{E}\right|$ be any smooth divisor on $\wt P$ as in \autoref{lem-5-2}.
	Let $R=\tau(\wt R)\subseteq P$ be its image in $P$.
	Then $R$ admits a singularity of order $2k$ at the point $p$, and
	$$\mathcal{O}_{P}(R) \cong  \delta^{\otimes 2},\qquad \text{where~}\delta=\mathcal{O}_{P}\big((g_k+1)C_0+\Gamma\big).$$
	Hence one can construct a hyperelliptic fibration $f_k:\, S_k \to E$ using the above data $(P,R,\delta)$.
	The genus $g_k$ of a general fiber of $f_k$ is $g_k=\frac{(k-1)(k+3)}{2}+1$.
	Moreover, using the notations introduced in \autoref{sec-hyperelliptic},
	one computes that $s_{2k}=1$, $s_2=(K_{\wt P/E}+\wt R)\cdot \wt R=10k$, and $s_i=0$ for other $i$.
	Hence by \autoref{key lemma of Xiao} (one can also compute $\chi(\mathcal{O}_{S_k})$ and $K_{S_k}^2$ by applying the formulas \cite[\S\,V.22]{bhpv} for double covers, as $\wt{R}$ is already smooth),
	$$\chi(\mathcal{O}_{S_k})=\chi_{f_k}=\frac{3k-1}{2},\qquad K_{S_k}^2=K_{f_k}^2=8k-8.$$
	Suppose that $q(S_k)=g(E)=1$. Then $f_k:\,S_k \to E$ is nothing but the Albanese fibration of $S_k$.
	Thus it remains to show that $q(S_k)=1$.
	After blowing up $\tau:\, \wt P \to P$,
	one sees that $S_k$ is a double cover of $\wt P$ branched over the smooth divisor $\wt{R}$.
	Moreover, similar to \autoref{claim-5-1}, one shows that $\wt{R}$ is ample.
	Hence by Kodaira's vanishing we get
 $$q(S_k)=h^1(\omega_{S_k})=h^1(\omega_{\wt P})+h^1(\omega_{\wt P}\otimes \tilde{\delta})=h^1(\omega_{\wt P})=g(E)=1$$ as required.

\begin{remark}\label{rem-5-3}
\autoref{thm-example} shows also that
	the upper bound in \autoref{pro-4-1} for $g$ odd is sharp.
	Indeed, the fibration  $f_k:\,S_k \to E$ is a relative minimal hyperelliptic fibration
 of odd genus $g_k=\frac{(k-1)(k+3)}{2}+1$, $q(S_k)=g(E)=1$,
 $n=\frac{R^2}{4(g+1)}=2$, and
 $$\left\{\begin{aligned}
 &\chi(\mathcal{O}_{S_k})=\chi_{f_k}=\frac{3k-1}{2},\\
 &K_{S_k}^2=K_{f_k}^2=8k-8.
 \end{aligned}\right.$$
 Hence
 $$\lambda_{f_k}=\frac{K_{S_k}^2}{\chi_{f_k}}= \frac{16}{3}-\frac{32}{3(3k-1)}.$$
 Therefore, one checks directly that
 $$\frac{(\lambda_{f_k}-4)^2}{4n}\chi_{f_k}^2+\left(\lambda_{f_k}-4+\frac{\lambda_{f_k}}{2n}\right)\chi_{f_k}+n+1=\frac{(k-1)(k+3)}{2}+1=g_k.$$
 \end{remark}
%		$$q(S_k)=g(E)=1,~ n=\frac{R^2}{4(g+1)}=2,~ $$
%		Note here $g_k=\frac{2}{9}\chi_{f_k}^2+\frac{4}{3}\chi_{f_k}$, $\lambda_{f_k}=\frac{K_{S_k}^2}{\chi_{f_k}}=\frac{8(k-1)-4}{\frac{3}{2}(k-1)}= \frac{16}{3}-\frac{8}{3(k-1)}$, hence
%		$\lambda_{f_k}-4=\frac{4}{3}(1-\frac{2}{k-1})$.
%		
%		Hence
%		\begin{align}
%		&\,\quad \frac{(\lambda_f-4)^2}{4n}\chi_{f_k}^2+(\lambda_f-4+\frac{\lambda_f}{2n})\chi_{f_k}+n+1\nonumber\\
%		&\,=  \frac{(\lambda_f-4)^2}{8}\chi_{f_k}^2+(\lambda_f-4+\frac{\lambda_f}{4})\chi_{f_k}+3\nonumber\\
%		&\,= \frac{2}{9}(1-\frac{2}{k-1})^2\chi_{f_k}^2+\big(\frac{4}{3}(1-\frac{2}{k-1})+\frac{2}{3}(2-\frac{1}{k-1})\big)\chi_{f_k}+3 \nonumber\\
%		&\,=  \frac{2}{9}\chi_{f_k}^2+\frac{4}{3}\chi_{f_k}+\frac{2}{9}(-\frac{4}{k-1}+\frac{4}{(k-1)^2})\chi_{f_k}^2+
%		\frac{2}{3}(2-\frac{5}{k-1}) \chi_{f_k} +3 \nonumber\\
%		&\, =g_k-\frac{2}{9}\cdot \frac{4}{k-1}\cdot \frac{9(k-1)^2}{4}+\frac{2}{9}\cdot \frac{4}{(k-1)^2}\cdot \frac{9(k-1)^2}{4}+
%		\frac{2}{3}\cdot \frac{2k-7}{k-1}\cdot \frac{3(k-1)}{2} +3  \nonumber\\
%		&\,=g_k-2(k-1)+2+(2k-7)+3	 \nonumber\\
%		&\,	=g_k \nonumber
%		\end{align}	

Using a similar method, we can also construct examples reaching the equality in \autoref{pro-4-1} for  $g$ even.
\begin{example}\label{thm-example-2}
		There exist a sequence of minimal irregular surfaces $S_k$ ($k\geq 1$ and odd) of general type with $q(S_k)=1$,
		such that their Albanese maps induce a hyperelliptic fibration $f_k:\,S_k \to E$ of even genus $g_k=(k+1)^2$, and that
		$$\chi(\mathcal{O}_{S_k})=\chi_{f_k}=\frac{3k+1}{2},\qquad K_{S_k}^2=K_{f_k}^2=8k-2, \quad n=1$$
and the equality in \autoref{thm-sharp-upper-bound} and  \autoref{pro-4-1} holds.

%g_k=\frac{(\lambda_{f_k}-4)^2}{4}\chi_{f_k}^2+(\lambda_{f_k}-4+\frac{\lambda_{f_k}}{2})\chi_f+2.
	\end{example}

  The proof is similar to \autoref{thm-example}, the difference is that here we take $P=\mathbb{P}_E(V)$, where $V$ is an indecomposable rank 2 vector bundle over an elliptic curve $E$ with $\deg V =1$.

  We use notations as in the construction of \autoref{thm-example}.
  Let  $h:P\rightarrow E$ be the $\mathbb{P}^1$-bundle over the elliptic curve $E$. We denote by
 $C$  a section of $h$ with $C^2=1$ and by $\Gamma$ a general fibre of $h$. Then we have
 $$ K_{P/E}=K_P\sim -2C+\Gamma,\quad C\cdot \Gamma =1, \quad (K_P)^2=0, \quad K_P\cdot C=-1, \quad K_P\cdot \Gamma =-2.$$

  The key point is the following

 \begin{lemma}\label{lemma-5-2}
 For any $p\in P$, let $\tau:\,\wt{P} \to P$ be the blow-up centered at $p$, and $\mathcal{E}$ be the exceptional curve.
		Then for any odd $k\geq 1$ and $g_k=(k+1)^2$, there exists a smooth divisor
$\wt{R}\in    \left|\tau^*\big(-(g_k+1)K_{P/E}+\Gamma\big)-2k\mathcal{E}\right|=\left|\tau^*\big((2g_k+2)C-g\Gamma\big)-2k\mathcal{E}\right|$.
 \end{lemma}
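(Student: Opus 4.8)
The plan is to mirror the proof of \autoref{lem-5-2}: establish base-point-freeness of the linear system and then apply Bertini. Writing $K_{\wt P}=\tau^*K_P+\mathcal{E}$ and $D:=(2g_k+2)C-g_k\Gamma$, and using $K_{P/E}=K_P=-2C+\Gamma$, one has $\tau^*D-2k\mathcal{E}=K_{\wt P}+L$ with $L=\tau^*\big(2(g_k+2)C-(g_k+1)\Gamma\big)-(2k+1)\mathcal{E}$; a direct computation gives $L^2=4(g_k+2)-(2k+1)^2=4k+11>0$. The goal is then to prove $L\cdot\wt D\geq 2$ for every irreducible curve $\wt D\subseteq\wt P$: by Nakai--Moishezon this makes $L$ ample, whence $|K_{\wt P}+L|$ is base-point-free by Reider's method \cite{reider88}, a general member $\wt R$ is smooth by Bertini, and since $\wt R\cdot\mathcal{E}=2k$ the image $R=\tau(\wt R)$ acquires an ordinary $2k$-fold point at $p$, as needed for the example in \autoref{thm-example-2}.

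For $\wt D=\tau^*D'-\beta\mathcal{E}$ with $D'\equiv aC+b\Gamma$ irreducible and $\beta=\mathrm{mult}_p D'\geq 0$, put $\mu:=D'\cdot(-K_P)=a+2b$; one computes $L\cdot\wt D=a+(g_k+2)\mu-(2k+1)\beta$. I would control this through three inputs: (i) since $V$ is stable, $2C-\Gamma=-K_P$ is nef \cite[\S V.2]{har77}, so $\mu\geq 0$, and $\mu\equiv a\pmod 2$; (ii) $\beta\leq D'\cdot\Gamma=a$, as an irreducible horizontal curve contains no fibre; (iii) adjunction on $\wt P$ yields $(a-1)\mu\geq \beta^2-\beta-2$. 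The exceptional curve gives $L\cdot\mathcal{E}=2k+1$; a fibre through $p$ gives $2(g_k+2)-(2k+1)=2k^2+2k+5$; and for a section ($a=1$) stability forces $b\geq 0$, hence $\mu\geq 1$, while smoothness forces $\beta\leq 1$, so $L\cdot\wt D\geq (g_k+2)-(2k+1)+1=k^2+3$. For a genuine multisection ($a\geq 2$) I would bound $\beta\leq\tfrac12\big(1+\sqrt{4(a-1)\mu+9}\big)$ from (iii) and run an AM--GM estimate as in \autoref{lem-5-2}; using the parity $\mu\equiv a$ together with $\mu\geq 1$, the inequality $a+(g_k+2)\mu\geq(2k+1)\beta+2$ holds in every case.

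The main obstacle --- and the reason the statement for \emph{every} $p$ is delicate --- is the boundary locus $\mu=0$. An irreducible curve with $D'\cdot(-K_P)=0$ is, by the Hodge index theorem, numerically a positive multiple of $2C-\Gamma$, and such curves genuinely occur: for any $2$-torsion $\eta\in\mathrm{Pic}^0(E)$ one has $V\otimes\eta\cong V$ (both are indecomposable of degree $1$ with the same determinant), so $V\cong\phi_*M$ for the corresponding étale double cover $\phi\colon E'\to E$, and the image of the section $M\hookrightarrow\phi^*V$ is an elliptic bisection $D_0\equiv 2C-\Gamma$. For $p\in D_0$ one has $\beta=1$ and $(\tau^*D-2k\mathcal{E})\cdot\wt{D_0}=D\cdot D_0-2k=2-2k$, so the estimate $L\cdot\wt D\geq 2$ fails and, for $k\geq 2$, $\wt{D_0}$ is even forced into the base locus. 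Thus the ampleness argument applies cleanly only for $p$ outside the finite union of such bisections, and the crux of the proof for all $p$ is a separate treatment of these finitely many curves --- either by refining the choice of $(E,V)$ in the construction so that no irreducible curve is numerically proportional to $2C-\Gamma$, or by a direct base-locus analysis at points of $D_0$. I expect this boundary case to be the hardest and most delicate part of the argument.
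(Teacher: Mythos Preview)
Your approach is identical to the paper's: write the system as $|K_{\wt P}+L|$, check $L^2=4k+11>0$, and aim to show $L\cdot\wt D\geq 2$ for every irreducible $\wt D$ so that Reider's theorem yields base-point-freeness and Bertini a smooth member. Your case split and the adjunction bound $(a-1)\mu\geq\beta(\beta-1)$ match the paper's argument exactly.

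The obstacle you isolate at $\mu=a+2b=0$ is genuine, and the paper's own proof does \emph{not} address it: in its case~(i) the paper simply writes ``$\wt D\sim -b\,\tau^*K_P$, hence $L\cdot\wt D=-2b\geq 2$'', which tacitly takes $\beta=0$ with no justification. Your bisection argument is correct --- for each nontrivial $2$-torsion $\eta$ one has $V\otimes\eta\cong V$ (same determinant, hence isomorphic by Atiyah's classification), and the associated \'etale double cover produces a smooth elliptic bisection $D_0\equiv 2C-\Gamma$; when $p\in D_0$ one gets $L\cdot\wt{D_0}=2-(2k+1)<0$, and indeed $(\tau^*D-2k\mathcal E)\cdot\wt{D_0}=2-2k$, so for $k\geq 2$ the strict transform $\wt{D_0}$ is forced into the fixed part of the linear system. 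Thus the hypothesis ``for any $p\in P$'' is too strong as stated. The construction of \autoref{thm-example-2} only requires a single $p$ for which the system is base-point-free, and your remedy --- choose $p$ off these anticanonical bisections --- is the correct one. You have not missed an idea; you have spotted an oversight in the paper's argument, and the fix you propose suffices for the intended application.
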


  \begin{proof}
	It is enough to prove that the linear system $\left|\tau^*\big((2g_k+2)C-g\Gamma\big)-2k\mathcal{E}\right|$ is base-point-free.
	Note that $K_{\wt P}=\tau^{*}K_{P}+\mathcal{E}$.
	It follows that
	$\tau^*\big((2g_k+2)C-g\Gamma\big)-2k\mathcal{E}
	=K_{\wt P}+L$, where
	$$L=\tau^*\left((2g_k+4)C-(g+1)\Gamma\right)-(2k+1)\mathcal{E}, \quad L^2=4k+11\geq 15.$$
	
	We claim that
	\begin{claim}\label{claim-5-2}
		For any irreducible curve $\wt{D}\subseteq \wt P$, $L\cdot \wt{D} \geq 2$. In particular, $L$ is ample.
	\end{claim}
	Assuming the above claim, then one sees easily that the linear system
	$$\left|\tau^*\big((2g_k+2)C-g\Gamma\big)-2k\mathcal{E}\right|=\left|K_{\wt P}+L\right|$$
	is base-point-free by applying Reider's method \cite{reider88}.
	Hence a general divisor $\wt R \in \left|K_{\wt P}+L\right|$
	is smooth by Bertini's theorem.
	
	It remains to prove \autoref{claim-5-2}.
	Let $\wt{D}\subseteq \wt P$ be any irreducible curve.
	If $\wt{D}=\mathcal{E}$, or $\wt{D}$ is the strict transform of any fiber of $h:\,P\to E$,
	then one checks easily that
	$$L\cdot \wt{D}>2.$$
	Otherwise, assume $\wt{D}\sim \tau^*(aC+b\Gamma)-\beta\mathcal{E}$ with $a>0$, then we have $a+2b\geq 0$ since $\wt{D}$ is an irreducible curve.
	Since the restriction $h|_{\wt{D}}:\, \wt{D} \to E$ is surjective,  we have
	\begin{equation}\label{eqn-5-11}
		0\leq 2p_a(\wt{D})-2 =(K_{\wt P}+\wt{D})\cdot \wt{D},\qquad\text{namely,}\qquad
		(a+2b)(a-1)\geq \beta(\beta-1).
	\end{equation}
	By direct computation,
	$$L\cdot \wt{D} = (a+2b)g_k +3a+4b-(2k+1)\beta=(a+2b)(k+1)^2+a-(2k+1)\beta+2(a+2b).$$

 (i) If $a+2b=0$, then  we have $b\leq -1$  and $\wt{D}\sim -b(\tau^*K_P)$. Hence we get $L\cdot \wt{D}=-2b\geq 2$;

 (ii)  if $a+2b=1$, then  we have $a\geq \beta(\beta-1)+1$ by \eqref{eqn-5-11}. Hence we get
$$L\cdot \wt{D} \geq (k+1)^2+\beta(\beta-1)+1-(2k+1)\beta+2=(k+1-\beta)^2+3> 2;$$

(iii) if $a+2b\geq 2$, we discuss the following two subcases separately:
	if $\beta\geq k+2$, then
	$$\begin{aligned}
	L\cdot \wt{D} &\,= (a+2b)(k+1)^2+(a-1)-(2k+1)\beta+2(a+2b)+1\\
	&\,\geq 2\sqrt{(k+1)^2(a+2b)(a-1)}-(2k+1)\beta+2(a+2b)+1\\
	&\,\geq 2(k+1)\sqrt{\beta(\beta-1)}-(2k+1)\beta+2(a+2b)+1\\
	&\,> 2(a+2b)+1\geq 1,
	\end{aligned}$$
Since $L\cdot \wt{D}$ is an integer, we get $L\cdot \wt{D}\geq 2$;
	if $\beta\leq k+1$, then
\[	L\cdot \wt{D} \geq 2(k+1)^2-(2k+1)(k+1)+4=k+5> 2. \qedhere\]
\end{proof}

We come back to the construction of \autoref{thm-example-2}.
	For any odd $k\geq 1$, let $$\wt{R}\in \left|\tau^*\big((2g_k+2)C-g\Gamma\big)-2k\mathcal{E}\right|$$ be any smooth divisor on $\wt P$ as in \autoref{lem-5-2}.
	Let $R=\tau(\wt R)\subseteq P$ be its image in $P$.
	Then $R$ admits a singularity of order $2k$ at the point $p$, and
	$$\mathcal{O}_{P}(R) \cong  \delta^{\otimes 2},\qquad \text{where~}\delta=\mathcal{O}_{P}\big((g_k+1)C-g\Gamma\big).$$
	Hence one can construct a hyperelliptic fibration $f_k:\, S_k \to E$ using the above data $(P,R,\delta)$.
	The genus $g_k$ of a general fiber of $f_k$ is $g_k=(k+1)^2$.
	Moreover, using the notations introduced in \autoref{sec-hyperelliptic},
	one computes that $s_{2k}=1$, $s_2=(K_{\wt P/E}+\wt R)\cdot \wt R=10k+6$, and $s_i=0$ for other $i$.
	Hence by \autoref{key lemma of Xiao} (one can also compute $\chi(\mathcal{O}_{S_k})$ and $K_{S_k}^2$ by applying the formulas \cite[\S\,V.22]{bhpv} for double covers, as $\wt{R}$ is already smooth),
	$$\chi(\mathcal{O}_{S_k})=\chi_{f_k}=\frac{3k+1}{2},\qquad K_{S_k}^2=K_{f_k}^2=8k-2.$$
	Suppose that $q(S_k)=g(E)=1$. Then $f_k:\,S_k \to E$ is nothing but the Albanese fibration of $S_k$.
	Thus it remains to show that $q(S_k)=1$.
	After blowing up $\tau:\, \wt P \to P$,
	one sees that $S_k$ is a double cover of $\wt P$ branched over the smooth divisor $\wt{R}$.
	Moreover, similar to \autoref{claim-5-2}, one shows that $\wt{R}$ is ample.
	Hence, arguing as above $q(S_k)=q(\wt P)=g(E)=1$ as required.
Finally one checks directly that $n=1$, $\lambda_{f_k}=\frac{16}{3}-\frac{28}{3(3k+1)}$ and
 $$\frac{(\lambda_{f_k}-4)^2}{4n}\chi_{f_k}^2+\left(\lambda_{f_k}-4+\frac{\lambda_{f_k}}{2n}\right)\chi_{f_k}+n+1=(k+1)^2=g_k.$$
This completes the construction of \autoref{thm-example-2}.\vspace{2mm}

At the end of this section, we construct examples showing that the inequality in \autoref{theorem upper bound for g} (1) is also sharp.
\begin{example}\label{example-3}
  In \autoref{thm-example-2},   for any integer $n\geq 1$, $\chi_n\geq 6$ such that $n|(2\chi_n+2)$, take $g_n:=\frac{2\chi_n+2}{n}\geq 2$, $g_n\equiv n-1 (\mod 2) $ and   $k=2$,
  $$\wt{R}\in \left|\tau^*\big(-(g_n+1)K_{P/E}+n\Gamma\big)-4\mathcal{E}\right| = \left|\tau^*\big((2g_n+2)C+(n-1-g_n)\Gamma\big)-4\mathcal{E}\right|,$$
  we can  get a sequence of minimal irregular surfaces $S_n$ ($n\geq 1$) of general type with $q(S_n)=1$,
		such that their Albanese maps induce a hyperelliptic fibration $f_n:\,S_n \to E$ of genus $g_n=\frac{2\chi_n+2}{n}$, and that
		$$\chi(\mathcal{O}_{S_n})=\chi_{f_n}=\chi_n,\qquad K_{S_n}^2=K_{f_n}^2=4\chi_{f_n}-2(n-1)\leq 4\chi_{f_n}, \quad g_n=\frac{4\chi_{f_n}+4}{2+4\chi_{f_n}-K_{f_n}^2}.$$

The construction is the same as in \autoref{thm-example-2} and we only need to prove the following
\begin{claim}
 For $L=\tilde{R}-K_{\wt{P}}=\tau^*\left((2g_n+4)C+(n-2-g)\Gamma\right)-5\mathcal{E}$,
we have $L^2\geq 5$ and for  any irreducible curve
 $\wt{D}\subseteq \wt P$, $L\cdot \wt{D} \geq 2$.
\end{claim}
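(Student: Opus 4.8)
The plan is to adapt, essentially verbatim, the argument used to prove \autoref{claim-5-2}, now with the contracted singularity of order $2k=4$ and with the coefficient $n$ of the fibre class left free; the one new input is the constraint $ng_n=2\chi_n+2\geq 14$ coming from the hypothesis $\chi_n\geq 6$. First I would record the intersection table on $\wt P$: working in $\mathrm{Num}(\wt P)=\mathbb Z\,\tau^*C\oplus\mathbb Z\,\tau^*\Gamma\oplus\mathbb Z\,\mathcal E$ with $(\tau^*C)^2=1$, $\tau^*C\cdot\tau^*\Gamma=1$, $(\tau^*\Gamma)^2=0$, $\mathcal E^2=-1$ and $\mathcal E\cdot\tau^*C=\mathcal E\cdot\tau^*\Gamma=0$, a direct computation gives $L^2=4n(g_n+2)-25$; since $ng_n\geq 14$ and $n\geq 1$ this is at least $4\cdot 14+8-25=39\geq 5$, which is the first assertion. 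For the second assertion I would first handle the ``vertical'' curves: $L\cdot\mathcal E=5$, and if $\wt D$ is the strict transform of a fibre of $h$ then $L\cdot\wt D$ equals $2g_n+4$ or $2g_n-1$ according as $p$ lies off or on that fibre, and both are $\geq 2$ since $g_n\geq 2$.

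For every other irreducible curve $\wt D\subseteq\wt P$ I would write $\wt D\equiv\tau^*(aC+b\Gamma)-\beta\mathcal E$ with $a\geq 1$ (as $\wt D$ dominates $E$) and $\beta\geq 0$. Exactly as in the proof of \autoref{claim-5-2}, irreducibility on $P=\mathbb P_E(V)$ with $V$ indecomposable of degree $1$ forces $a+2b\geq 0$, and since $\wt D$ maps onto the elliptic curve $E$ one has $p_a(\wt D)\geq 1$, so adjunction gives $(a-1)(a+2b)\geq\beta(\beta-1)$. A direct computation yields
\[
L\cdot\wt D=(g_n+2)(a+2b)+an-5\beta.
\]
When $a+2b\geq 1$ I would use the adjunction inequality to bound $a$ from below by a quadratic in $\beta$, substitute into the displayed formula, and then close the resulting elementary estimate — in each of the finitely many ranges of $a+2b$ and of $\beta$ — by invoking $ng_n\geq 14$ together with $g_n\geq 2$ and $n\geq 1$; this is the step where the hypothesis $\chi_n\geq 6$ is actually used, and it proceeds just as the corresponding estimates in the proof of \autoref{claim-5-2} (with $k=2$), with the splitting of the range of $\beta$ at $\beta=4$.

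The one genuinely delicate case — and the step I expect to be the main obstacle — is $a+2b=0$. Then $a=-2b=2m$ with $m\geq 1$, the adjunction inequality forces $\beta\leq 1$, and $\wt D$ is the strict or total transform of an irreducible curve $D\subset P$ numerically equivalent to $-mK_P$ with $D^2=0$. If $\beta=0$ then $L\cdot\wt D=2mn\geq 2$ and we are done; the point is to exclude $\beta=1$, i.e. to guarantee that the blown-up point $p$ lies on no irreducible curve numerically proportional to $-K_P$. I would argue this as in subcase (i) of the proof of \autoref{claim-5-2}: on the ruled surface $P$ one checks that $h^0\big(P,\mathcal O_P(-mK_P)\big)\leq 1$ for every $m\geq 1$ (the pushforward $h_*\mathcal O_P(-mK_P)$ is a twist of $S^{2m}V$, hence an indecomposable bundle of degree $0$ on $E$), so $\dim|{-mK_P}|\leq 0$, the curves in question form at most a countable family, and a general choice of $p$ avoids all of them; thus $\beta=1$ cannot occur when $a+2b=0$. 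This establishes the claim. Granting it, $L$ is ample with $L^2\geq 5$ and $L\cdot\wt D\geq 2$ for every curve $\wt D$, so the linear system $|\wt R|=|K_{\wt P}+L|$ is base-point-free by Reider's method \cite{reider88}, a general $\wt R$ is smooth by Bertini's theorem, and the construction of $S_n$ together with the computation of $\chi_{f_n}$, $K_{f_n}^2$, $n$, $g_n$ and $q(S_n)=1$ then proceed verbatim as in \autoref{thm-example-2}.
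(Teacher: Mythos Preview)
Your proposal is correct and follows essentially the same route as the paper: compute $L^2=4n(g_n+2)-25\geq 39$, dispose of $\mathcal E$ and the fibre classes directly, write $L\cdot\wt D=(g_n+2)(a+2b)+an-5\beta$ for horizontal curves, and combine the adjunction bound $(a+2b)(a-1)\geq\beta(\beta-1)$ with $ng_n\geq 14$. The paper's only organizational difference is that for $a+2b\geq 2$ it splits at $\beta=2$ (using $2\sqrt{ng_n(a+2b)(a-1)}\geq\sqrt{56\beta(\beta-1)}>5\beta$) rather than at $\beta=4$; either split works.

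On the case $a+2b=0$ you are in fact more careful than the paper. The paper simply writes $\wt D\sim -b\,\tau^*K_P$ and concludes $L\cdot\wt D=-2b\geq 2$, tacitly taking $\beta=0$ (and, apparently, $n=1$); your observation that adjunction only forces $\beta\in\{0,1\}$ and that $\beta=1$ must be excluded by choosing $p$ off the anticanonical locus is the correct way to close this. One small refinement: to rule out $\beta=1$ you must exclude $p$ from every irreducible curve \emph{numerically} (not just linearly) equivalent to some $-mK_P$; but since $h_*\mathcal O_P(-mK_P)\otimes\mathcal M\cong F_{2m+1}\otimes\mathcal M$ has $h^0=0$ for every nontrivial $\mathcal M\in\mathrm{Pic}^0(E)$ (Atiyah), the unique effective divisor in the numerical class of $-mK_P$ is $mD_1$ with $D_1\in|{-K_P}|$, so it suffices to take $p\notin D_1$.
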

\begin{proof}
 Here we have  $n\geq 1, ng_n=2\chi_n+2\geq 14$ and
 $$L^2=(2g_n+4)^2+(2g_n+4)(n-g_n-1)-25=4ng_n+8n+2(2g_n+4)-25>37>5.$$
Now let $\wt{D}\subseteq \wt P$ be any irreducible curve.
	If $\wt{D}=\mathcal{E}$, or $\wt{D}$ is the strict transform of any fiber of $h:\,P\to E$,
	then one checks easily that$L\cdot \wt{D}>2$.
	Otherwise, assume $\wt{D}\sim \tau^*(aC+b\Gamma)-\beta\mathcal{E}$ with $a>0$, then we have $a+2b\geq 0$ and $(a+2b)(a-1)\geq \beta(\beta-1)$ as in \autoref{lemma-5-2}.
By direct computation, we get
	$$L\cdot \wt{D} = (a+2b)g_n +an+2(a+2b)-5\beta.$$
 If $a+2b=0$ or $1$, one can show $L\cdot \wt{D} \geq 2$ as in \autoref{lemma-5-2}.
Now assume $a+2b\geq 2$, if  $\beta\leq 1$, we have $L\cdot \wt{D}\geq 2g_n+an+4-5\geq 4$ since
$g_n\geq 2$; if $\beta\geq 2$, we have $2\beta(\beta-1)\geq \beta^2$ and thus
$$\begin{aligned}
	L\cdot \wt{D} &\,= (a+2b)g_n+(a-1)n+n+4-5\beta\\
	&\,\geq \sqrt{4ng_n(a+2b)(a-1)}-5\beta+n+4\\
	&\,\geq \sqrt{56\beta(\beta-1)}-5\beta+n+4\\
	&\,>n+4\geq5.
	\end{aligned}$$
\end{proof}
\end{example}


\begin{thebibliography}{BHPV04}
	
	\bibitem[BHPV04]{bhpv}
	W.~P. Barth, K.~Hulek, C.~A.~M. Peters, and A.~{Van de Ven}.
	\newblock {\em Compact complex surfaces}, volume~4 of {\em Ergebnisse der
		Mathematik und ihrer Grenzgebiete. 3. Folge. A Series of Modern Surveys in
		Mathematics [Results in Mathematics and Related Areas. 3rd Series. A Series
		of Modern Surveys in Mathematics]}.
	\newblock Springer-Verlag, Berlin, second edition, 2004.
	
	\bibitem[Cat00]{Cat00}
	F.~Catanese.
	\newblock {\em Fibred surfaces, varieties isogenous to a product and related
		moduli spaces}.
	\newblock Amer. J. Math., 122(1):\,1--44, 2000.
	
	\bibitem[CC91]{cc91}
	F.~Catanese and C.~Ciliberto.
	\newblock {\em Surfaces with {$p_g=q=1$}}.
	\newblock In {\em Problems in the theory of surfaces and their classification
		({C}ortona, 1988)}, Sympos. Math., XXXII, pages 49--79. Academic Press,
	London, 1991.
	
	\bibitem[CCL98]{ccm98}
	F.~Catanese, C.~Ciliberto, and M.~Mendes Lopes.
	\newblock {\em On the classification of irregular surfaces of general type with
		nonbirational bicanonical map}.
	\newblock Trans. Amer. Math. Soc., 350(1):\,275--308, 1998.
	
	\bibitem[CH88]{CH88}
	M.~Cornalba and J.~Harris.
	\newblock {\em Divisor classes associated to families of stable varieties, with
		applications to the moduli space of curves}.
	\newblock Ann. Sci. \'{E}cole Norm. Sup. (4), 21(3):\,455--475, 1988.
	
	\bibitem[CKY17]{cky17}
	D.~I. Cartwright, V.~Koziarz, and S.-K. Yeung.
	\newblock {\em On the {C}artwright-{S}teger surface}.
	\newblock J. Algebraic Geom., 26(4):\,655--689, 2017.
	
	\bibitem[Deb82]{deb82}
	O.~Debarre.
	\newblock {\em In\'{e}galit\'{e}s num\'{e}riques pour les surfaces de type
		g\'{e}n\'{e}ral}.
	\newblock Bull. Soc. Math. France, 110(3):\,319--346, 1982.
	\newblock With an appendix by A. Beauville.
	
	\bibitem[FP15]{fp15}
	D.~Frapporti and R.~Pignatelli.
	\newblock {\em Mixed quasi-\'{e}tale quotients with arbitrary singularities}.
	\newblock Glasg. Math. J., 57(1):\,143--165, 2015.

	\bibitem[GGL25]{ggl25}
C.~Gong, Z. Guo and X. L\"u.
\newblock {\em A note on singularity indices of hyperelliptic fibrations}.
\newblock Preprint.

	\bibitem[Har77]{har77}
	R.~Hartshorne.
	\newblock {\em Algebraic geometry}.
	\newblock Graduate Texts in Mathematics, No. 52. Springer-Verlag, New
	York-Heidelberg, 1977.
	
	\bibitem[HP02]{hp02}
	C.~D. Hacon and R.~Pardini.
	\newblock {\em Surfaces with {$p_g=q=3$}}.
	\newblock Trans. Amer. Math. Soc., 354(7):\,2631--2638, 2002.

\bibitem[Hor77]{hor77}
	E. Horikawa.
	\newblock {\em On algebraic surfaces with pencils of curves of genus {$2$}}.
	\newblock Complex analysis and algebraic geometry, pp. 79--90, Iwanami Shoten Publishers, Tokyo, 1977.
	
	\bibitem[Ish05]{Ish05}
	H.~Ishida.
	\newblock {\em Bounds for the relative {E}uler-{P}oincar\'{e} characteristic of
		certain hyperelliptic fibrations}.
	\newblock Manuscripta Math., 118(4):\,467--483, 2005.
	
	\bibitem[Kon96]{konno96}
	K.~Konno.
	\newblock {\em Even canonical surfaces with small {$K^2$}. {III}}.
	\newblock Nagoya Math. J., 143:\,1--11, 1996.
	
	\bibitem[LL25]{ll25}
	Songbo.~Ling and X.~L\"u.
	\newblock {\em Albanese fibrations of surfaces with low slope}.
	\newblock Math. Ann., 391:\,4641--4671, 2025.
	
	\bibitem[LT13]{lt13}
	X.~Liu and S.~Tan.
	\newblock {\em Families of hyperelliptic curves with maximal slopes}.
	\newblock Sci. China Math., 56(9):\,1743--1750, 2013.
	
	\bibitem[LZ17]{LZ17}
	X.~Lu and K.~Zuo.
	\newblock {\em On the slope of hyperelliptic fibrations with positive relative
		irregularity}.
	\newblock Trans. Amer. Math. Soc., 369(2):\,909--934, 2017.
	
	\bibitem[Miy84]{Miy84}
	Y.~Miyaoka.
	\newblock {\em The maximal number of quotient singularities on surfaces with
		given numerical invariants}.
	\newblock Math. Ann., 268(2):\,159--171, 1984.
	
	\bibitem[Pen11]{penegini11}
	M.~Penegini.
	\newblock {\em The classification of isotrivially fibred surfaces with
		{$p_g=q=2$}}.
	\newblock Collect. Math., 62(3):\,239--274, 2011.
	\newblock With an appendix by S\"{o}nke Rollenske.
	
	\bibitem[Pir02]{pirola02}
	G.~P. Pirola.
	\newblock {\em Surfaces with {$p_g=q=3$}}.
	\newblock Manuscripta Math., 108(2):\,163--170, 2002.
	
	\bibitem[PP17]{PP17}
	M.~Penegini and F.~Polizzi.
	\newblock {\em A note on surfaces with {$p_g=q=2$} and an irrational
		fibration}.
	\newblock Adv. Geom., 17(1):\,61--73, 2017.
	
	\bibitem[Rei88]{reider88}
	I.~Reider.
	\newblock {\em Vector bundles of rank {$2$} and linear systems on algebraic
		surfaces}.
	\newblock Ann. of Math. (2), 127(2):\,309--316, 1988.
	
	\bibitem[Xia87a]{Xiao87}
	G.~Xiao.
	\newblock {\em Fibered algebraic surfaces with low slope}.
	\newblock Math. Ann., 276(3):\,449--466, 1987.
	
	\bibitem[Xia87b]{xiao87-2}
	G.~Xiao.
	\newblock {\em Irregularity of surfaces with a linear pencil}.
	\newblock Duke Math. J., 55(3):\,597--602, 1987.
	
	\bibitem[Xia91]{Xiao91}
	G.~Xiao.
	\newblock {\em {$\pi_1$} of elliptic and hyperelliptic surfaces}.
	\newblock Internat. J. Math., 2(5):\,599--615, 1991.
	
	\bibitem[Xia92]{Xiao92}
	G.~Xiao.
	\newblock {\em The fibrations of algebraic surfaces (Chinese)}.
	\newblock Shanghai Scientific and Technical Publishers, 1992.
	
	\bibitem[Zuc03]{zucconi03}
	F.~Zucconi.
	\newblock {\em Surfaces with {$p_g=q=2$} and an irrational pencil}.
	\newblock Canad. J. Math., 55(3):\,649--672, 2003.
	
\end{thebibliography}
\end{document}